\newtheorem{thm}{Theorem}[section]
\newtheorem{lem}[thm]{Lemma}
\newtheorem{qst}[thm]{Question}
\newtheorem{prop}[thm]{Proposition}
\newtheorem{cor}[thm]{Corollary}
\theoremstyle{definition}
\newtheorem{df}[thm]{Definition}
\newtheorem{rk}[thm]{Remark}
\newtheorem{ex}[thm]{Example}
\newtheorem{con}[thm]{Convention}
\newtheorem*{mainthmA}{Theorem A}
\newtheorem*{mainthmB}{Theorem B}
\newcommand{\RR}{\mathbb R}
\newcommand{\G}{\Gamma}
\newcommand{\ol}{\overline}
\newcommand{\mA}{\mathcal{A}}
\newcommand{\vphi}{\varphi}
\newcommand{\veps}{\varepsilon}
  \newcommand{\Z}{\mathbb Z}
\newcommand{\from}{\colon}
\newcommand{\wt}{\widetilde}
\newcommand{\Lam}{\Lambda}
\newcommand{\out}{\textup{Out}(F_r)}
\newcommand{\stL}{\textup{Stab}(\Lambda^+_\vphi)}
\newcommand{\norm}{N(\langle\vphi\rangle)}
\newcommand{\norma}[1]{N(\c{\vphi^{#1}})}
\newcommand{\cent}{Cen(\langle\vphi\rangle)}
\newcommand{\com}{Comm(\langle\vphi\rangle)}
\newcommand{\os}{CV_r}
\newcommand{\aut}{\textup{Aut}(F_r)}
\renewcommand{\c}[1]{\langle #1 \rangle}
\newcommand{\ce}[2]{Cen_{#1}(\c{#2})}
\newcommand{\eps}{\epsilon}
\newcommand{\lam}{\lambda}
\newcommand{\gl}{\text{GL}(2,\Z)}
\newcommand{\pgl}{\mathbb{P}\text{GL}(2,\Z)}
\begin{document}

\title{Normalizers and centralizers of cyclic subgroups generated by lone axis fully irreducible outer automorphisms}
\author{Yael Algom-Kfir and Catherine Pfaff}

\address{\tt Department of Mathematics, University of Haifa \newline
  \indent Mount Carmel;  Haifa, 31905;  Israel
  \newline \indent  {\url{http://www.math.haifa.ac.il/algomkfir/}}, } \email{\tt yalgom@univ.haifa.ac.il}
  
\address{\tt Department of Mathematics, University of California at Santa Barbara \newline
  \indent South Hall, Room 6607; Santa Barbara, CA 93106-3080
  \newline 
  \indent  {\url{http://math.ucsb.edu/~cpfaff/}}, } \email{\tt cpfaff@math.ucsb.edu}

\date{}

\thanks{The first author is supported by ISF 1941/14. Both authors acknowledge support from U.S. National Science Foundation grants DMS 1107452, 1107263, 1107367 ``RNMS: Geometric structures And Representation varieties'' (the GEAR Network).} 

\maketitle

\begin{abstract}
We let $\vphi$ be an ageometric fully irreducible outer automorphism so that its Handel-Mosher \cite{hm11} axis bundle consists of a single unique axis (as in \cite{mp13}). We show that the centralizer $\cent$ of the cyclic subgroup generated by $\vphi$ equals the stabilizer $\stL$ of the attracting lamination $\Lambda^+_{\vphi}$ and is isomorphic to $\Z$. We further show, via an analogous result about the commensurator, that the normalizer $\norm$ of $\c{\vphi}$ is isomorphic to either $\Z$ or $\Z_2 * \Z_2$. 
\end{abstract}

\section{Introduction}

It is well known \cite{m94} that, given a pseudo-Anosov mapping class $\vphi$, the centralizer $\cent$ and normalizer $\norm$ of the cyclic subgroup $\langle \vphi \rangle$ 
are virtually cyclic. In fact, this property characterizes pseudo-Anosov mapping classes.\footnote{The justification of this fact is given by Sisto in \\ \url{http://mathoverflow.net/questions/82889/centralizers-of-non-iwip-elements-of-outf-n?rq=1} }

We recall some history surrounding this problem for the outer automorphism groups $\out$. 
In \cite{bfh97}, Bestvina, Feighn, and Handel constructed for a fully irreducible outer automorphism $\vphi \in \out$ the attracting lamination $\Lam^+_\vphi$. 
They proved that the stabilizer $\stL$ of $\Lam^+_\vphi$ in $\out$ is virtually cyclic (see also \cite[Thereom 4.4]{kl11}). 
Let $\com$ denote the commensurator of $\c{\vphi}$. 
Whenever the lamination $\Lam^+_\vphi$ is defined we have 
\[ \c{\vphi} \leq \cent \leq \stL  \leq \com \quad \text{and} \quad \cent \leq \norm  \leq \com.\]

In the fully irreducible case, the groups appearing above are all finite index subgroups of one another, and each of the inclusions may be strict (see Examples \ref{example1}, \ref{example2}, and \ref{example3}).

This article is concerned with identifying the centralizer and normalizer of $\c{\vphi}$ when $\vphi$ is an ageometric lone axis fully irreducible outer automorphism, as defined in Subsection \ref{ss:LoneAxisFullyIrreducibles}.  
Briefly, the term ``lone axis'' is used for when the axis bundle, defined by Handel and Mosher \cite{hm11}, consists of a single unique axis. The axis bundle is an analogue of the axis of a pseudo-Anosov, but in general contains many fold lines. 

\begin{mainthmA}\label{main1}
Let $\vphi \in \out$ be an ageometric fully irreducible outer automorphism such that the axis bundle $\mA_{\vphi}$ consists of a single unique axis, then $\cent = \stL \cong \Z$. 
\end{mainthmA}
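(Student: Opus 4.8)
The plan is to obtain both conclusions, $\cent = \stL$ and $\stL \cong \Z$, from one mechanism: the action of $\stL$ on the lone axis. I would first dispose of the easy inclusion $\cent \subseteq \stL$, since for any $\psi$ commuting with $\vphi$ one has $\psi(\Lam^+_\vphi) = \Lam^+_{\psi\vphi\psi^{-1}} = \Lam^+_\vphi$ by naturality of the attracting lamination under conjugation. All of the content thus lies in the reverse inclusion and in pinning down the isomorphism type, and I expect to get both simultaneously.

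The key step is to make $\stL$ act on the Handel--Mosher axis bundle $\mA_\vphi$. By the Bestvina--Feighn--Handel theorem quoted above, $\stL$ is virtually cyclic, so $\c{\vphi}$ has finite index in it and, for each $\psi \in \stL$, some positive power of the (infinite order) conjugate $\psi^{-1}\vphi\psi$ is a power of $\vphi$. Hence $\psi^{-1}\vphi\psi$ has the same attracting and repelling trees $\{T^+_\vphi, T^-_\vphi\}$ as $\vphi$; since this pair is also $\{\psi^{-1}T^+_\vphi, \psi^{-1}T^-_\vphi\}$ and $\psi$ fixes the attractor $T^+_\vphi$ determined by $\Lam^+_\vphi$, it fixes the repeller $T^-_\vphi$ as well. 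Thus $\stL$ preserves the ordered pair $(T^+_\vphi, T^-_\vphi)$ and therefore preserves $\mA_\vphi$. By the lone axis hypothesis $\mA_\vphi$ is a single properly embedded line $\gamma \cong \RR$, so we obtain a homomorphism $\rho \from \stL \to \operatorname{Isom}(\gamma) \cong \operatorname{Isom}(\RR)$, and the theorem reduces to three facts about $\rho$: (i) its image consists of translations; (ii) its image is nontrivial and discrete; and (iii) $\rho$ is injective.

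Facts (i) and (ii) are comparatively soft. Orienting $\gamma$ so that its forward ray limits to $\Lam^+_\vphi$, any $\psi \in \stL$ fixes this end because it fixes $\Lam^+_\vphi$, while $\Lam^+_\vphi \neq \Lam^-_\vphi$ keeps the two ends distinct; hence $\rho(\psi)$ preserves the orientation and is a translation, giving (i). For (ii), the image is nontrivial because $\vphi$ translates $\gamma$ by $\log\lam_\vphi \neq 0$, and it is discrete because $\out$ acts properly discontinuously on $\os$; a nontrivial discrete group of translations of $\RR$ is infinite cyclic. Granting injectivity (iii) as well, we get $\stL \cong \Z$. Finally, the image of $\rho$ is abelian, so (iii) forces $\stL$ itself to be abelian; as $\vphi \in \stL$, every element of $\stL$ then commutes with $\vphi$, whence $\stL \subseteq \cent$ and therefore $\cent = \stL \cong \Z$.

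I expect injectivity (iii) to be the main obstacle, since it is precisely the statement that no nontrivial outer automorphism fixes the lone axis pointwise. An element of $\ker\rho$ lies in the finite part of the virtually cyclic group $\stL$ and fixes a train track point $G \in \os$ lying on $\gamma$, so it is realized by a symmetry of the marked graph $G$ that additionally preserves the train track map, the turn structure, and the ideal Whitehead graph carried along the axis. Upgrading such a symmetry to the identity of $\out$ is where the ageometric lone axis rigidity of the Mosher--Pfaff analysis---rotationlessness together with the connectivity and structure of the local and ideal Whitehead graphs---must be invoked, and this combinatorial rigidity is the heart of the matter. A secondary subtlety is the orientation claim (i), which depends on the two ends of $\gamma$ being genuinely distinguished by the attracting versus repelling laminations; this is exactly the point at which ageometricity, guaranteeing $\Lam^+_\vphi \neq \Lam^-_\vphi$ and a bi-infinite axis with distinct ends, is used.
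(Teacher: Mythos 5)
Your global strategy coincides with the paper's: dispose of $\cent \leq \stL$ by naturality of $\Lam^+_\vphi$ under conjugation, show $\stL$ preserves the ordered pair $(T^+_\vphi,T^-_\vphi)$ and hence acts on the lone axis by translations (the content of the paper's Lemmas \ref{L:Fixers}, \ref{defOfRho}, and \ref{L:Homomorphism}), get infinite cyclic image since $\vphi$ translates nontrivially, and recover $\cent=\stL$ from cyclicity at the end (the paper's Proposition \ref{cyclicImpliesEqual} via Corollary \ref{centStl}). Your commensuration argument that every $\psi\in\stL$ fixes $T^-_\vphi$ as well as $T^+_\vphi$ is a legitimate justification of an assertion the paper makes without comment at the start of Lemma \ref{defOfRho}, and your discreteness-via-proper-discontinuity step is a harmless variant of the paper's appeal to virtual cyclicity. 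One misattribution: the two ends of the axis are distinguished because $T^+_\vphi \neq T^-_\vphi$, which holds for \emph{every} fully irreducible by North--South dynamics; ageometricity is not what is needed there.

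The genuine gap is exactly the step you defer: injectivity of $\rho$, i.e.\ that no nontrivial element of $\out$ fixes the axis pointwise. This is not a technicality to be cited away --- it is where essentially all of the paper's work lies (Propositions \ref{L:FixingAxis} and \ref{psiIsId}, resting on Lemma \ref{tripodLemma}, Proposition \ref{P:EveryVertexPrincipal}, and the absence of PNPs from Remark \ref{noPNP}). Soft considerations cannot close it: an element of $\ker\rho$ is indeed a finite-order symmetry of a marked train track graph preserving all the structure you list, but such symmetries genuinely occur for ageometric fully irreducibles --- the paper's Example \ref{example2} exhibits an order-$3$ element of $\ce{\text{Out}(F_7)}{\vphi}$, which in particular lies in $\stL$ --- so only the lone axis hypothesis can rule them out, and one must see how. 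The paper does so as follows: take the representative $f\from\G\to\G$ of a power of $\vphi$ with every vertex principal and all directions but one fixed (Proposition \ref{P:EveryVertexPrincipal}); realize $\psi\in\ker\rho$ by isometries $h$ of $\G$ and $h'$ of $\G\vphi$, giving a square $f\circ h\simeq h'\circ f$ commuting a priori only up to homotopy; use tripods of lamination leaves at lifted vertices (Lemma \ref{tripodLemma}) to force the square to commute on the nose; run a direction-by-direction combinatorial analysis to conclude $h'=h$, so that $h$ commutes with $f$ and permutes the $f$-fixed directions, hence fixes the unique non-fixed direction; deduce that $h$ fixes the unique $\wt f$-fixed leaf through that direction, and finally that an isometry restricting to the identity on a leaf whose image covers $\G$ is the identity (Proposition \ref{psiIsId}). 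Without this argument or a substitute for it, your outline establishes only that $\stL$ surjects onto $\Z$ with finite kernel --- which already follows from Bestvina--Feighn--Handel --- and not the theorem.
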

 We then proved the following group theoretic corollary.

\begin{mainthmB}\label{main2}  
Let $\vphi \in \out$ be an ageometric fully irreducible outer automorphism such that the axis bundle $\mA_{\vphi}$ consists of a single unique axis, then either 
\begin{enumerate}
\item $\cent = \norm = \com \cong \Z$  or
\item $\cent \cong \Z$ and $\norm = \com \cong \Z_2 \ast \Z_2$.
\end{enumerate}
Further, in the second case, we have that $\vphi^{-1}$ is also an ageometric fully irreducible outer automorphism such that the axis bundle $\mA_{\vphi^{-1}}$ consists of a single unique axis. 
\end{mainthmB}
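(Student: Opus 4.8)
The plan is to leverage Theorem A, which already gives $\cent = \stL \cong \Z$, and to reduce the whole statement to an analysis of the commensurator $\com$. Since $\cent \leq \norm \leq \com$ and $\cent = \stL \leq \com$ hold in general, it suffices to (i) prove $\com = \norm$, and then (ii) determine the possible isomorphism types of $\norm$. Throughout I would use two standard tools: the equivariance of the attracting lamination under the $\out$-action, $\Lam^+_{\psi\alpha\psi^{-1}} = \psi\cdot\Lam^+_\alpha$, and the fact that conjugation preserves the expansion factor, together with $\lam(\vphi^k) = \lam(\vphi)^k$ and $\lam(\vphi^{-k}) = \lam(\vphi^{-1})^k$ for $k>0$.

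For step (i), let $\psi\in\com$, so $\psi\vphi^m\psi^{-1} = \vphi^n$ for some nonzero $m,n$; replacing $(m,n)$ by $(-m,-n)$ if necessary, I may assume $m>0$. Comparing expansion factors gives $\lam(\vphi^m) = \lam(\vphi^n)$. If $n>0$ this forces $m=n$ (as $\lam(\vphi)>1$), so $\psi$ fixes $\Lam^+_\vphi = \Lam^+_{\vphi^m}$, whence $\psi\in\stL = \cent \leq \norm$. The substantive case is $n<0$, where the possibly distinct expansion factors of $\vphi$ and $\vphi^{-1}$ block an immediate length comparison; here I argue geometrically. From $\mA_{\vphi^k}=\mA_\vphi$ for $k\neq0$ and naturality $\psi\cdot\mA_\vphi = \mA_{\psi\vphi\psi^{-1}}$, the relation $\psi\vphi^m\psi^{-1}=\vphi^n$ yields $\psi\cdot\mA_\vphi = \mA_{\vphi^{-1}}$; since $\mA_\vphi$ is a lone axis and $\psi$ acts as a homeomorphism of $\os$, the bundle $\mA_{\vphi^{-1}}$ is also a single axis. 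As ageometricity is preserved under inversion, $\vphi^{-1}$ is then an ageometric fully irreducible with lone axis, so Theorem A applies to it and gives $\text{Stab}(\Lam^-_\vphi) = \text{Stab}(\Lam^+_{\vphi^{-1}}) = \cent$. Now $n<0$ forces $\psi\cdot\Lam^+_\vphi = \Lam^+_{\vphi^n} = \Lam^-_\vphi$ (and, taking inverses, $\psi\cdot\Lam^-_\vphi = \Lam^+_\vphi$), so conjugation by $\psi$ carries $\stL = \text{Stab}(\Lam^+_\vphi)$ to $\text{Stab}(\Lam^-_\vphi) = \cent = \stL$. Thus $\psi$ normalizes $\cent = \c{\vphi_0}\cong\Z$, where $\vphi_0$ is a generator with $\vphi = \vphi_0^d$; since $\text{Aut}(\Z)=\{\pm1\}$, either $\psi$ centralizes $\vphi_0$ — impossible, as that would put $\psi\in\cent=\stL$ and fix $\Lam^+_\vphi\neq\Lam^-_\vphi$ — or $\psi\vphi_0\psi^{-1}=\vphi_0^{-1}$, whence $\psi\vphi\psi^{-1}=\vphi^{-1}$ and $\psi\in\norm$. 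This proves $\com=\norm$.

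For step (ii), consider the homomorphism $\norm\to\Z_2$ recording whether $\psi\vphi\psi^{-1}$ equals $\vphi$ or $\vphi^{-1}$, with kernel $\cent\cong\Z$. If it is trivial, then $\norm=\cent$, and with $\com=\norm$ we are in case (1), everything $\cong\Z$. If it is onto, pick an inverting element $\psi_0$, so $\psi_0\vphi_0\psi_0^{-1}=\vphi_0^{-1}$ by the argument above. Then $\psi_0^2$ commutes with $\vphi_0$, hence lies in $\cent=\c{\vphi_0}$, say $\psi_0^2=\vphi_0^k$; but conjugating this by $\psi_0$ gives $\vphi_0^k = \psi_0\vphi_0^k\psi_0^{-1} = \vphi_0^{-k}$, forcing $\vphi_0^{2k}=1$ and therefore $k=0$. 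Thus $\psi_0$ is an involution, $\norm = \c{\vphi_0}\rtimes\c{\psi_0}$ with $\psi_0$ acting by inversion, and $\norm = \c{\psi_0,\,\vphi_0\psi_0}\cong\Z_2\ast\Z_2$; this is case (2). Finally, in case (2) the relation $\psi_0\vphi\psi_0^{-1}=\vphi^{-1}$ exhibits $\vphi^{-1}$ as a conjugate of $\vphi$, so it is automatically ageometric, fully irreducible, and lone axis, which is the last assertion.

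I expect the main obstacle to be exactly the opposite-sign case of step (i): unlike the pseudo-Anosov setting, $\vphi$ and $\vphi^{-1}$ need not share an expansion factor, so the stretch-factor bookkeeping alone cannot exclude exotic commensurations. The lone-axis hypothesis, through the identification $\psi\cdot\mA_\vphi = \mA_{\vphi^{-1}}$ and the reuse of Theorem A applied to $\vphi^{-1}$, is precisely what forces any such $\psi$ to genuinely invert $\vphi$, and hence what collapses $\com$ onto $\norm$.
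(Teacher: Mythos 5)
There is one genuine error in your argument, and it sits exactly at the crux of your case $n<0$: the claim that ``ageometricity is preserved under inversion'' is false. By Handel--Mosher's work on parageometric outer automorphisms, the inverse of a parageometric fully irreducible is ageometric; consequently an ageometric fully irreducible can perfectly well have a parageometric (hence non-ageometric) inverse. This inversion-asymmetry is precisely the subtlety this paper is organized around --- it is why the ``Further'' clause of Theorem B is a nontrivial conclusion, and why the paper poses as an open question whether case (2) ever occurs --- so it cannot be assumed. (Your parenthetical ``$\mA_{\vphi^k}=\mA_\vphi$ for $k\neq 0$'' fails for $k<0$ for the same reason; what you need, and actually use, is $\mA_{\alpha^k}=\mA_{\alpha}$ for $k\geq 1$, applied separately to $\alpha=\vphi$ and $\alpha=\vphi^{-1}$.) The gap is repairable inside your own argument: in the case at hand, $(\vphi^{-1})^{|n|}=\vphi^{n}=\psi\vphi^{m}\psi^{-1}$ is conjugate to $\vphi^{m}$; ageometricity is a conjugacy invariant and is a property of the attracting tree, which $\vphi^{-1}$ shares with its positive power $(\vphi^{-1})^{|n|}$; hence $\vphi^{-1}$ is ageometric. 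Your derivation of the lone-axis property of $\vphi^{-1}$, by contrast, is sound: you get it from the bijection on fold lines induced by the action of $\psi$ (justified exactly as in the paper's Lemma~\ref{L:Fixers}), not from any inversion-invariance. With this one substitution, the appeal to Theorem A for $\vphi^{-1}$ becomes legitimate and the case $n<0$ closes correctly.

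Once repaired, your proof is correct and takes a genuinely different route from the paper's. The paper never needs Theorem A for $\vphi^{-1}$ in proving Theorem B: Lemma~\ref{L:CommensuratorStabilizer}(2) and Corollary~\ref{centStl} show, for \emph{any} fully irreducible, that $\com=\textup{Stab}(\{\Lam^+_{\vphi},\Lam^-_{\vphi}\})$, so that $\stL\leq\com$ with index at most $2$; Theorem B is then pure algebra about the extension of $\cent\cong\Z$ by $\Z_2$, with the nontrivial-action case settled by the vanishing of $H^2(\Z_2,\Z)$ for the inverting action, and the inverse statement deduced only at the very end from the relation $\psi\vphi\psi^{-1}=\vphi^{-1}$. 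You instead analyze commensuration relations $\psi\vphi^m\psi^{-1}=\vphi^n$ directly by the sign of $n$ (stretch factors for $n>0$; the tree- and lamination-swapping argument plus Theorem A for $\vphi^{-1}$ when $n<0$), and you replace the cohomological classification by the elementary involution computation $\psi_0^2=\vphi_0^k\Rightarrow\vphi_0^{k}=\vphi_0^{-k}\Rightarrow k=0$, which exhibits the splitting explicitly. The paper's route is shorter given its Section 3 machinery and sidesteps the inversion pitfall entirely; yours is more self-contained and makes explicit the geometric mechanism forcing commensurating elements to swap $T^{\pm}_{\vphi}$ and $\Lam^{\pm}_{\vphi}$. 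One last small point: you assert $\Lam^+_{\vphi}\neq\Lam^-_{\vphi}$ without justification. It is true, but you can avoid needing it altogether: if $\psi$ centralized $\vphi_0$ it would commute with $\vphi$, giving $\vphi^m=\psi\vphi^m\psi^{-1}=\vphi^n$ with $m>0>n$, contradicting that $\vphi$ has infinite order.
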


It is a consequence of \cite{IWGII} that ageometric lone axis fully irreducible outer automorphisms exist in each rank and it is proved in \cite{kp15} that this situation is generic along a specific ``train track directed'' random walk. It is noteworthy that the conditions for an outer automorphism to be an ageometric lone axis fully irreducible can be checked via the Coulbois computer package.\footnote{The Coulbois computer package is available at \cite{c12}.}

Understanding what properties transfer to inverses of outer automorphisms is in general elusive.
Theorem B gives a condition which guarantees that $\vphi^{-1}$ also admits a lone axis. 
However, we do not know if the latter case in fact occurs, prompting the following question.

\begin{qst} Does there exist some ageometric lone axis fully irreducible outer automorphism such that $\com = \norm \cong \Z_2 \ast \Z_2$? 
\end{qst}

We pose one further question.

\begin{qst} Can one give a concrete description of $\cent$ and $\norm$ when $\vphi$ is not an ageometric lone axis fully irreducible outer automorphism?
\end{qst}

Our strategy is to construct a homomorphism $\rho \from \stL \to (\RR,+)$ which is the signed translation distance along the axis (Lemma \ref{defOfRho}). We prove in Proposition \ref{psiIsId} that $\text{ker}(\rho)$ is trivial. Using Corollary \ref{centStl} we conclude Theorem A. Theorem B quickly follows.

Apart from the works of Bestiva, Feighn, and Handel \cite{bfh97} and Kapovich and Lustig \cite{kl11} mentioned above,  we  mention the following  results. 
Given any element $\vphi \in \out$, using the machinery of completely split relative train track maps, Feighn and Handel \cite{fh09} present an algorithm that virtually determines the weak centralizer of $\c{\vphi}$, i.e. all elements that commute with some power of $\vphi$. 
When $\vphi$ is a Dehn twist, Rodenhausen and Wade \cite{rw15} give an algorithm determining a presentation of $\cent$. They use this to compute a presentation of the centralizer of a Nielsen  generator.

\subsection*{Acknowledgements}
This paper came out of an idea presented to the second author by Koji Fujiwara after a talk she gave at Hebrew University. Both authors would like to thank Sam Ballas, Yuval Ginosar, Ilya Kapovich, Darren Long, Jon McCammond, and Lee Mosher for helpful and interesting conversations.

\section{Preliminary definitions and notation}{\label{ss:PrelimDfns}}

To keep this section at a reasonable length, we will provide only references for the definitions that are better known.

\subsection{Train track maps, Nielsen paths, and principal vertices}{\label{ss:TTsPNPs}}
Irreducible elements of $\out$ are defined in \cite{bh92} and fully irreducible outer automorphisms are those such that each of their powers is irreducible.
Every irreducible outer automorphism can be represented by a special kind of graph map called a train track map, as defined in \cite{bh92}. In particular, we will require that vertices map to vertices. 
Moreover, we can also choose these maps so that they are defined on graphs with no valence-1 or valence-2 vertices  (from the proof of \cite{bh92} Theorem 1.7). 
We refer the reader to \cite{bh92} for the definitions of \emph{directions}, \emph{periodic directions}, \emph{fixed directions},  \emph{legal paths}, \emph{Nielsen paths} ( \emph{NPs}) and \emph{periodic Nielsen paths} (\emph{PNPs}).

\begin{df}[Principal points] Given a train track map $g \colon  \Gamma \to \Gamma$, following \cite{hm11} we call a point \emph{principal} that is either the endpoint of a PNP or is a periodic vertex with $\geq 3$ periodic directions. Thus, in the absence of PNPs, a point is principal if and only if it is a periodic vertex with $\geq 3$ periodic directions
\end{df}

\subsection{Outer Space $\os$ and the attracting tree $T_+^{\varphi}$ for a fully irreducible $\vphi \in \out$}{\label{ss:OuterSpace}}
Let $\os$ denote the Culler-Vogtmann Outer Space in rank $r$, as defined in \cite{cv86}, with the asymmetric Lipschitz metric, as defined in \cite{ak_asymmetry}. 
 The group $\out$ acts naturally on $\os$ on the right by homeomorphisms. An element $\vphi \in \os$ sends a point $X = (\G, m, \ell) \in \os$ to the point $X \cdot \vphi = (\G, m \circ \Phi, \ell)$, where $\Phi$ is a lift in $\aut$ of $\vphi$. 
Let $\overline{\os}$ denote the compactification of $\os$, as defined in \cite{cl95, bf94}. The action of $\out$ on $\os$ extends to an action on $\ol{\os}$ by homeomorphisms. 

\begin{df}[Attracting tree $T_+^{\varphi}$]{\label{d:AttractingTree}}
Let $\vphi \in \out$ be a fully irreducible outer automorphism. Then $\vphi$ acts on $\ol{\os}$ with North-South dynamics (see \cite{ll03}). We denote by $T_+^{\varphi}$ the attracting fixed point of this action and by $T_-^{\varphi}$ the repelling fixed point of this action.
\end{df}

\vskip10pt

\subsection{The attracting lamination $\Lambda_{\varphi}$ for a fully irreducible outer automorphism.}{\label{ss:AttractingLamination}}

We give a concrete description of $\Lam^+_\vphi$ using a particular train track representative $g \from \G \to \G$. This is the original definition appearing in \cite{bfh97}. Note that apriori it is not clear that it does not depend on the train track representative.

\begin{df}[Iterating neighborhoods]{\label{d:IteratingNeighborhoods}}
Let $g \from \G \to \G$ be an affine irreducible train track map so that, in particular, there has been an identification of each edge $e$ of $\Gamma$ with an open interval of its length $\ell(e)$ determined by the Perron-Frobenius eigenvector. Let $\lam = \lam(\vphi)$ be its stretch factor and assume $\lam>1$. 
Let $x$ be a periodic point which is not a vertex (such points are dense in each edge). 
Let $\veps >0$ be sufficiently small so that the $\veps$-neighborhood of $x$, denoted $U$, is contained in the interior of an edge.  There exists an $N > 0$ such that $x$ is fixed, $U \subset g^N(U)$, and $Dg^N$ fixes the directions at $x$. We choose an isometry $\ell \colon (-\veps, \veps) \to U$ and extend it to the unique locally isometric immersion $\ell \colon \RR \to \Gamma$ so that $\ell(\lambda^Nt)=g^N(\ell(t))$. We then say that $\ell$ is \emph{obtained by iterating a neighborhood of $x$}.
\end{df}

\begin{df}[Leaf segments, equivalent isometric immersions]
We call isometric immersions $\gamma_1 \colon [a,b] \to \Gamma$ and $\gamma_2 \colon [c,d] \to \Gamma$ \emph{equivalent} when there exists an isometry $h \colon [a,b] \to [c,d]$ so that $\gamma_1 = \gamma_2 \circ h$. Let $\ell \from \RR \to \G$ be an isometric immersion. \emph{A leaf segment} of $\ell$ is the equivalence class of the restriction to a finite interval of $\RR$. Two isometric immersions $\ell$ and $\ell'$ are equivalent if each leaf segment of $\ell$  is a leaf segment of $\ell'$ and vice versa. 
\end{df}

\begin{df}[The realization in $\G$ of the attracting lamination $\Lambda^+_{\varphi}(\G)$]{\label{d:AttractingLamination}}
The \emph{attracting lamination realized in $\Gamma$}, denoted  $\Lambda^+_{\varphi}(\G)$, is the equivalence class of 
a line $\ell$ obtained by iterating a periodic point in $\G$ (as in Definition \ref{d:IteratingNeighborhoods}). An element of $\Lambda^+_\vphi(\Gamma)$ is called a leaf. 
Notice that $\Lambda^+_\vphi(\Gamma)$ can be realized as an $F_r$-invariant  set of bi-infinite geodesics in $\wt \G$, the universal cover of $\G$. We shall denote this set by $\Lambda^+_\vphi(\wt \Gamma)$.
\end{df}

The marking of $\G$ induces an identification of $\partial \G$ with $\partial F_r$. The attracting lamination $\Lam^+_\vphi$ is the image of $\Lam^+_\vphi(\wt \G)$ under this identification. In \cite{bfh97} it is proved that this set is independent of the choice of $g$.

\begin{df}[The action of $\out$ on the set of laminations $\Lambda^\pm_{\vphi}$]
Let $\psi \in \out$, then by \cite[Lemma 3.5]{bfh97},
\begin{equation}\label{eqLamAction}
\psi \cdot (\Lambda^+_{\vphi}, \Lambda^-_{\vphi}) = (\Lambda^+_{\psi\vphi\psi^{-1}}, \Lambda^-_{\psi\vphi\psi^{-1}}).
\end{equation}
\end{df}

\subsection{Whitehead graphs}{\label{ss:wg}}

The following definitions are in \cite{hm11} and \cite{mp13}.

\begin{df}[Stable Whitehead graphs and local Whitehead graphs]{\label{d:WhiteheadGraphs}}

Let $g \from \G \to \G$ be a train track map. The \emph{local Whitehead graph} 
$LW(v;\Gamma)$ at a point $v \in \Gamma$ has a vertex for each direction at $v$ and an edge connecting the vertices corresponding to the pair of directions $\{d_1,d_2\}$ if the turn $\{d_1,d_2\}$ is taken by an image of an edge.
The \emph{stable Whitehead graph} $SW(v;\Gamma)$ at a principal point $v$ is then the subgraph of $LW(v;\Gamma)$ obtained by restricting to the periodic direction vertices.
\end{df} 

The map $g$ induces a continuous simplicial map $Dg \from LW(g,v) \to LW(g,g(v))$. 
When $g$ is rotationless and $v$ a principal vertex, $Dg$ acts as the identity on $SW(g,v)$, when viewed as a subgraph of $LW(g,v)$, and hence gives an induced surjection $Dg \from LW(g,v) \to SW(g,v)$. 
We recall that for a train track representative of a fully irreducible outer automorphism the local Whitehead graph at each vertex is connected. Hence:

\begin{lem}
If $g \from \G \to \G$ is a train track map representing a fully irreducible outer automorphism $\vphi$ and $v \in \G$ is a principal vertex, then $SW(g,v)$ is connected. 
\end{lem}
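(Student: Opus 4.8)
The plan is to deduce the connectivity of $SW(g,v)$ from the connectivity of $LW(g,v)$, exploiting the surjection between the two graphs recorded just above the statement. Two facts are in hand. First, $LW(g,v)$ is connected: this is exactly the property recalled immediately before the lemma, valid because $g$ represents a fully irreducible outer automorphism. Second, the direction map $Dg$ induces a simplicial self-map fixing the periodic directions, and it carries $LW(g,v)$ \emph{onto} the subgraph $SW(g,v)$. Granting these, the lemma is a one-line topological consequence, and the only work is to make the second fact available under the hypotheses actually assumed.

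The core step is the elementary principle that a simplicial map between graphs is continuous and that the continuous image of a connected space is connected. I would phrase it directly in terms of edge-paths, which avoids any point-set subtlety: given two periodic directions $d$ and $d'$ at $v$ (that is, two vertices of $SW(g,v)$), the connectivity of $LW(g,v)$ supplies an edge-path in $LW(g,v)$ joining them. Applying a suitable iterate $Dg^k$ carries this path to an edge-path lying in $SW(g,v)$ and joining $Dg^k(d)=d$ to $Dg^k(d')=d'$, using that $Dg^k$ fixes each periodic direction. This exhibits $d$ and $d'$ in a single component of $SW(g,v)$, and since $d,d'$ were arbitrary, $SW(g,v)$ is connected.

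The one point requiring care is that the surjection $Dg \from LW(g,v)\to SW(g,v)$ was stated for \emph{rotationless} $g$, whereas the lemma allows an arbitrary train track representative of a fully irreducible $\vphi$; this is where I expect the only (mild) technical bookkeeping to sit. I would handle it directly by iterating: since the set of directions at the periodic vertex $v$ is finite, every direction is pre-periodic under $Dg$, so for a large multiple $k$ of the relevant periods the map $Dg^k$ fixes $v$, fixes every periodic direction at $v$, and carries every direction at $v$ into the periodic ones. Then $Dg^k \from LW(g,v)\to LW(g,v)$ is a simplicial map whose image is \emph{exactly} $SW(g,v)$: the image lands inside $SW(g,v)$ because all vertices of the image are periodic directions and each image edge is a turn between periodic directions taken by an image of an edge, while it covers all of $SW(g,v)$ because the vertices and edges of $SW(g,v)\subseteq LW(g,v)$ are fixed by $Dg^k$. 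Verifying this equality of the image with $SW(g,v)$ — landing inside it and covering all its edges — is the main obstacle, but it is routine; the substance of the proof is the ``connected image of a connected graph'' observation of the previous paragraph.
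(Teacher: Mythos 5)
Your proof is correct and takes essentially the same route as the paper, whose entire justification for the lemma is the paragraph preceding it: $LW(g,v)$ is connected for a train track representative of a fully irreducible outer automorphism, $Dg$ induces a surjection $LW(g,v) \to SW(g,v)$, and the simplicial image of a connected graph is connected. Your additional bookkeeping --- passing to an iterate $Dg^k$ that fixes the periodic directions and maps every direction to a periodic one, so as to remove the rotationless hypothesis under which the paper states the surjection --- is a legitimate and careful patch of a point the paper elides with its ``Hence:''.
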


\begin{lem}\label{tripodLemma}
Let $g \from \G \to \G$ be a rotationless PNP-free train track representative of an ageometric fully irreducible $\vphi \in \out$. Let $\wt \G$ be the universal cover of $\G$ and $\wt v \in \wt \G$ a vertex that projects to a principal vertex $v \in \G$. Then there exist two leaves $\ell_1, \ell_2$ of the lamination $\Lam_\vphi^+(\wt \G)$ so that $\ell_1 \cup \ell_2$ is a tripod whose vertex is $\wt v$. 
\end{lem}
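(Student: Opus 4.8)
The plan is to produce the two leaves as concatenations of \emph{eigenrays} at $\wt v$, organized by the stable Whitehead graph $SW(v;\G)$, and then to extract a tripod from a vertex of $SW(v;\G)$ of valence at least two. Since $g$ is rotationless and $v$ is principal, and since $g$ is PNP-free so that principal simply means ``periodic vertex with $\geq 3$ periodic directions'', the vertex $v$ is fixed and carries fixed directions $d_1,\dots,d_k$ with $k\geq 3$. I would choose the lift $\wt g\from\wt\G\to\wt\G$ fixing $\wt v$; it then fixes each direction $\widetilde d_i$ at $\wt v$ lying over some $d_i$, since $D\wt g(\widetilde d_i)$ projects to $Dg(d_i)=d_i$ and is a direction at $\wt g(\wt v)=\wt v$. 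Because $D\wt g$ fixes $\widetilde d_i$, the initial edge $\widetilde e_i$ at $\wt v$ in direction $\widetilde d_i$ satisfies $\wt g(\widetilde e_i)=\widetilde e_i\cdot u_i$, so the paths $\wt g^n(\widetilde e_i)$ form a nested increasing sequence converging to a ray $R_i$ issuing from $\wt v$. As $\wt\G$ is a tree and distinct directions have distinct initial edges, the rays $R_1,\dots,R_k$ meet pairwise only at $\wt v$.

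The crux is to show that whenever $\{d_i,d_j\}$ is an edge of $SW(v;\G)$, the bi-infinite line $L_{ij}:=\ol{R_i}\cup R_j$ through $\wt v$ is a leaf of $\Lam^+_\vphi(\wt\G)$. By definition of $SW(v;\G)$, the turn $\{d_i,d_j\}$ is taken by some $g^N(e)$. Lifting so that an occurrence of this turn is based at $\wt v$ and then applying $\wt g^n$, the path $\wt g^{N+n}(\widetilde e)$ contains the subpath $\ol{\wt g^n(\alpha_i)}\,\wt g^n(\beta_j)$, where $\alpha_i,\beta_j$ are the two sides of that turn, based at $\wt v$ and pointing along $\widetilde d_i,\widetilde d_j$. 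Since $\wt g^n(\alpha_i)$ begins with $\wt g^n(\widetilde e_i)$, whose length tends to infinity along $R_i$, and likewise for $j$, every finite subpath of $L_{ij}$ appears inside $\wt g^{N+n}(\widetilde e)$ for $n$ large; hence $L_{ij}$ is a leaf.

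I expect this convergence-of-iterates argument to be the main obstacle. The delicate point is not merely that $L_{ij}$ is a leaf, but that the germ of a leaf in a fixed direction is forced to follow the eigenray, so that two leaves sharing a direction at $\wt v$ genuinely share the \emph{entire} ray $R_j$ rather than only a germ at $\wt v$; this is exactly what guarantees the union below is an honest tripod, and it is where the PNP-freeness (ensuring a single eigenray per fixed direction) does its work.

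Finally I would run the combinatorial step. By the preceding lemma $SW(v;\G)$ is connected, and it has $k\geq 3$ vertices, so it cannot be a disjoint union of single edges and isolated vertices; some vertex, say $d_2$, therefore has valence $\geq 2$, with two distinct neighbors $d_1,d_3$ (both $\neq d_2$). Setting $\ell_1:=L_{12}$ and $\ell_2:=L_{32}$ yields two leaves, each containing the ray $R_2$, and $\ell_1\cup\ell_2=R_1\cup R_2\cup R_3$ is a union of three rays issuing from $\wt v$ in the three distinct directions $\widetilde d_1,\widetilde d_2,\widetilde d_3$, meeting only at $\wt v$. This is precisely a tripod with vertex $\wt v$, completing the argument.
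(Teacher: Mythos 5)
Your proof is correct and follows essentially the same route as the paper's: since $v$ is principal and $g$ is PNP-free, the stable Whitehead graph $SW(v;\G)$ is connected with $\geq 3$ vertices, so it has a vertex of valence $\geq 2$; rotationlessness lets you lift $g$ to $\wt g$ fixing $\wt v$ and build eigenrays in the fixed directions; and the two incident Whitehead edges become two leaves whose union is the tripod $R_1 \cup R_2 \cup R_3$. The only difference is that where the paper simply cites Handel--Mosher for the fact that an edge of $SW(v;\G)$ (a taken turn between fixed directions) yields a leaf $\ol{R_i} \cup R_j$ of $\Lam^+_\vphi(\wt\G)$, you prove this correspondence directly by iterating an occurrence of the taken turn under $\wt g$ -- a legitimate argument, provided one also invokes primitivity of the transition matrix to identify ``paths occurring in iterated edge images'' with ``leaf segments of the generating line'' in both directions required by the paper's equivalence-class definition of a leaf.
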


\begin{proof}
Since $v$ is a principal vertex and there are no PNPs, we know $SW(g,v)$ has $\geq 3$ vertices. 
Since $SW(g,v)$ is connected, one of these vertices $d_1$ will belong to at least 2 edges $\eps_1, \eps_2$. Let $d_2,d_3$ be the directions corresponding to the other vertices of these edges. 
Since $g$ is rotationless, periodic directions are in fact fixed directions.
We may lift $g$ to a map $\wt g \from \wt \G \to \wt \G$ that fixes $\wt v$. 
Iterating the lifts of the edges that correspond to $d_1, d_2, d_3$ will give us three eigenrays $R_1, R_2, R_3$ initiating at $\wt v$. 
The 2 edges $\eps_1, \eps_2$ correspond to 2 leaves $\ell_1$ and $\ell_2$ of $\Lam_\vphi^+(\wt \G)$ \cite{hm11}. We have $\ell_1 \cup \ell_2 = R_1 \cup R_2 \cup R_3$. Hence, as desired, $\ell_1 \cup \ell_2$ is a tripod whose vertex is $\wt v$.
\end{proof}

\subsection{Axis bundles}{\label{ss:tt}}

Three equivalent definitions of the axis bundle $\mathcal{A}_{\varphi}$ for a nongeometric fully irreducible $\varphi \in \out$ are given in \cite{hm11}. We include only the definition that we use.

\begin{df}[Fold lines]\label{dfFoldLines}
A \emph{fold line} in $\os$ is a continuous, injective, proper function $\mathbb{R} \to \os$ defined by \newline
\noindent 1. a continuous 1-parameter family of marked graphs $t \to \Gamma_t$ and \newline
\noindent 2. a family of homotopy equivalences $h_{ts} \colon \Gamma_s \to \Gamma_t$ defined for $s \leq t \in \mathbb{R}$, each marking-preserving, \newline
\indent and satisfying:
~\\
\vspace{-\baselineskip}
\begin{description}
\item [\emph{Train track property}] $h_{ts}$ is a local isometry on each edge for all $s \leq t \in \mathbb{R}$. 
\item [\emph{Semiflow property}] $h_{ut} \circ h_{ts} = h_{us}$ for all $s \leq t \leq u \in \mathbb{R}$ and $h_{ss} \colon \Gamma_s \to \Gamma_s$ is the identity for all $s \in \mathbb{R}$.
\end{description}
\end{df}

\begin{df}[Axis Bundle]{\label{d:AxisBundle}} $\mathcal{A}_{\varphi}$ is the union of the images of all fold lines $\mathcal{F} \colon \mathbb{R} \to \os$ such that $\mathcal{F}$(t) converges in $\overline{\os}$ to $T_{-}^{\varphi}$ as $t \to -\infty$ and to $T_{+}^{\varphi}$ as $t \to +\infty$.
\end{df}

\begin{df}[Axes]{\label{d:Axes}}
We call the fold lines in Definition \ref{d:AxisBundle} the \emph{axes} of the axis bundle.
\end{df}

\subsection{Lone Axis Fully Irreducibles Outer Automorphisms}{\label{ss:LoneAxisFullyIrreducibles}}

\begin{df}[Lone axis fully irreducibles]
A fully irreducible $\vphi \in \out$ will be called a \emph{lone axis fully irreducible outer automorphism} if $\mathcal{A}_{\varphi}$ consists of a single unique axis.
\end{df}

\cite[Theorem 3.9]{mp13} gives necessary and sufficient conditions on an ageometric fully irreducible outer automorphism $\vphi \in \out$ to ensure that $\mathcal{A}_{\varphi}$ consists of a single unique axis. It is also proved there that, under these conditions, the axis will be the periodic fold line for a (in fact any) train track representative of $\vphi$. In particular, as is always true for axis bundles, $\mathcal{A}_{\varphi}$ contains each point in Outer Space on which there exists an affine train track representative of a power of $\vphi$.

\begin{rk}\label{noPNP}
It will be important for our purposes that no train track representative of an ageometric lone axis fully irreducible $\vphi$ has a periodic Nielsen path. This follows from \cite[Lemma 4.4]{mp13}, as it shows that each train track representative of each power of $\vphi$ is stable, hence (in the case of an ageometric fully irreducible outer automorphism) has no Nielsen paths.
\end{rk}

The following proposition is a direct consequence of \cite[Corollary 3.8]{mp13}. 

\begin{prop}[\cite{mp13}]\label{P:EveryVertexPrincipal}
Let $\vphi$ be an ageometric lone axis fully irreducible outer automorphism, then there exists a train track representative $g \from \Gamma \to \Gamma$ of some power $\vphi^R$ of $\vphi$ so that all vertices of $\Gamma$ are principal, and fixed, and all but one direction is fixed.
\end{prop}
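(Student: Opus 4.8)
The plan is to realize the desired map as a rotationless representative sitting on the unique axis, and then to read off its local structure from the lone axis analysis of \cite{mp13}. First I would pass to a rotationless power: there is an $R>0$ so that $\vphi^R$ is rotationless \cite{fh09}, and I would fix a train track representative $g\from\Gamma\to\Gamma$ of $\vphi^R$ with no valence-$1$ or valence-$2$ vertices. Two observations reduce the work. By Remark \ref{noPNP} no representative of a power of $\vphi$ carries a PNP, so by the definition of principal point a vertex is principal exactly when it is periodic with at least three periodic directions. Moreover, rotationlessness makes every periodic vertex fixed and every periodic direction fixed. Hence a principal vertex is automatically fixed, so the statement ``all vertices fixed'' will follow for free once ``all vertices principal'' is established; rotationlessness also lets me identify periodic directions with fixed directions throughout. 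Thus it suffices to prove (i) every vertex of $\Gamma$ is principal, and (ii) exactly one direction of $\Gamma$ fails to be fixed.

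For (i) I would invoke \cite[Corollary 3.8]{mp13}: the hypothesis that $\vphi$ is an ageometric lone axis fully irreducible forces the representative on the axis to have no superfluous vertices, i.e.\ every vertex carries at least three fixed directions and is therefore principal. Granting (i), claim (ii) follows by a direction count against the Euler characteristic. Write $V$, $E$ for the numbers of vertices and edges, so that $E-V=r-1$ and the total number of directions is $2E$. Let $k_v\geq 3$ denote the number of fixed directions at a (principal) vertex $v$, and $F=\sum_v k_v$ the total number of fixed directions, which is exactly the number of vertices of the ideal Whitehead graph $\iwp$. The ageometric lone axis hypothesis gives the index identity $\sum_v\bigl(1-\tfrac12 k_v\bigr)=\tfrac32-r$ \cite{mp13}, that is $V-\tfrac12 F=\tfrac32-r$, whence $F=2V+2r-3$. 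The number of non-fixed directions is therefore $2E-F=(2V+2r-2)-(2V+2r-3)=1$, which is precisely (ii).

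The step I expect to be the main obstacle is (i), which is exactly what I am delegating to \cite[Corollary 3.8]{mp13}. A priori a rotationless representative may carry non-principal vertices, namely valence-$\geq 3$ vertices with at most two fixed directions, and the soft rotationless bookkeeping above does not exclude them. Ruling them out is where the geometry of the axis bundle must enter: the uniqueness of the fold line forbids any branching beyond a single forced fold, and it is this rigidity --- rather than any purely combinatorial count --- that pins the representative down to the tight local structure (one illegal turn, every vertex principal) asserted in the proposition. Once (i) is in hand, assertions (ii) and ``all vertices fixed'' are the routine consequences extracted above, so the entire weight of the argument rests on this translation from the representative-independent invariants of $\vphi$ (the index and $\iwp$) to the concrete combinatorics of a single well-chosen $g$.
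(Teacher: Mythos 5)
Your proposal is correct and takes essentially the same route as the paper: the paper gives no independent argument, stating the proposition as a direct consequence of \cite[Corollary 3.8]{mp13}, which is precisely the result you delegate the key step to (combined with the lone-axis index equality $i(\vphi) = \frac{3}{2}-r$ from \cite{mp13}). Your additional Euler-characteristic count showing $2E - F = 1$ non-fixed directions is a correct re-derivation of content the cited corollary already supplies, so it is harmless, if redundant.
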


\subsection{The stabilizer $\stL$ of the lamination}{\label{s:LaminationStabilizer}}

\begin{df}[$\stL$] Given a fully irreducible $\vphi \in \out$, we let $\stL$ denote the subgroup of $\out$ fixing $\Lambda^+_{\vphi}$ setwise, i.e. sending leaves of $\Lambda^+_{\vphi}$ to leaves of $\Lambda^+_{\vphi}$.
\end{df}

Bestvina, Feighn, and Handel \cite{bfh97} define a homomorphism (related to the expansion factor)  
\begin{equation}\label{homoDef}
\sigma \colon \stL \to (\RR_{> 0}, \cdot).
\end{equation}
They use $\sigma$ to prove the following theorem (\cite[Theorem 2.14]{bfh97}):

\begin{thm}[{\cite[Theorem 2.14]{bfh97}} or {\cite[Theorem 4.4]{kl11}}]{\label{t:stabilizerlamination}}
For each fully irreducible $\vphi \in \out$, we have that $\stL$ is virtually cyclic.
\end{thm}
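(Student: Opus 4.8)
The plan is to exploit the expansion-factor homomorphism $\sigma \from \stL \to (\RR_{> 0}, \cdot)$ of \eqref{homoDef}, which records the positive real by which an element of $\stL$ rescales the lengths of leaf segments of $\Lambda^+_\vphi$. Since $\vphi$ itself lies in $\stL$ and stretches $\Lambda^+_\vphi$ by its stretch factor $\lam(\vphi) > 1$, the image $\sigma(\stL)$ is nontrivial. I claim the whole theorem reduces to the single finiteness statement
\[ (\ast) \qquad \text{for every } C > 0 \text{ the set } \{\, \psi \in \stL \mid \sigma(\psi) \leq C \,\} \text{ is finite.} \]
Indeed, granting $(\ast)$: taking $C=1$ shows $\ker(\sigma)$ is finite; and since every element of $\sigma(\stL)$ lying in an interval $(0,C]$ is the image of an element of the finite set $\{\psi \mid \sigma(\psi)\le C\}$, the subgroup $\sigma(\stL) \le (\RR_{>0},\cdot)$ meets each bounded interval in a finite set and is therefore a discrete, hence infinite cyclic, subgroup. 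The short exact sequence $1 \to \ker(\sigma) \to \stL \to \sigma(\stL) \to 1$ then exhibits $\stL$ as finite-by-$\Z$, which is virtually cyclic, as desired.

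Before attacking $(\ast)$ I would first pin down $\sigma$ and its homomorphism property: fix a train track representative $g \from \G \to \G$ and realize $\Lambda^+_\vphi$ as $\Lambda^+_\vphi(\G)$ in the sense of Definition \ref{d:AttractingLamination}; for $\psi \in \stL$, the fact that $\psi$ permutes the leaves of $\Lambda^+_\vphi$ yields a well-defined rescaling constant $\sigma(\psi)$, and multiplicativity under composition is then immediate. The heart of the argument, and the main obstacle, is establishing $(\ast)$. The idea is that a bound $\sigma(\psi)\le C$ forces $\psi$ to have bounded combinatorial complexity relative to the fixed realization of $\Lambda^+_\vphi$ in $\G$, after which a properness/finiteness principle leaves only finitely many possibilities. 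Concretely, preserving the lamination while distorting leaf-segment lengths by a factor at most $C$ should bound the displacement $\psi$ can produce between $\G$ and $\G\cdot\psi$ in the relevant asymmetric metric; since the leaves of $\Lambda^+_\vphi$ fill (the attracting tree $T_+^\vphi$ of a fully irreducible being free and dual to a filling lamination), an element preserving $\Lambda^+_\vphi$ with boundedly distorted lengths is determined up to finitely many choices by its effect on a finite generic family of leaf segments, and only finitely many elements of $\out$ can realize each such effect.

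This last finiteness is exactly where full irreducibility is essential — it is what guarantees that $\Lambda^+_\vphi$ is filling and that $T_+^\vphi$ is a free $\RR$-tree with only finite equivariant-isometry symmetry — and it is the step I expect to require the most care, since the $\out$-action on the boundary point $[T_+^\vphi]$ is not itself properly discontinuous and the bounded-complexity input must be extracted from the lamination's realization rather than assumed. With $(\ast)$ in hand the conclusion is formal: $\ker(\sigma)$ is finite and $\sigma(\stL)\cong\Z$, so $\stL$ is virtually cyclic.
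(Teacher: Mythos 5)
First, a point of orientation: the paper does not prove this theorem at all; it is quoted as a known result of Bestvina--Feighn--Handel \cite[Theorem 2.14]{bfh97} (see also \cite[Theorem 4.4]{kl11}), and the surrounding text records only that their proof goes through the expansion-factor homomorphism $\sigma \from \stL \to (\RR_{>0},\cdot)$ of (\ref{homoDef}). Your proposal follows that same outline (finite kernel plus discrete image implies virtually cyclic), so the skeleton is the right one. But as written it has two genuine problems. The first is that your key statement $(\ast)$ is false: for any $C>0$ the set $\{\psi \in \stL \mid \sigma(\psi) \le C\}$ is infinite, because it contains $\vphi^{-n}$ for all large $n$ (indeed $\sigma(\vphi^{-n}) = \lam(\vphi)^{-n} \to 0$, and these elements are pairwise distinct precisely because $\sigma$ takes distinct values on them). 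In particular your deduction ``taking $C=1$ shows $\ker(\sigma)$ is finite'' rests on a false premise. What you need is the two-sided statement: for every $C \ge 1$ the set $\{\psi \in \stL \mid C^{-1} \le \sigma(\psi) \le C\}$ is finite; this does imply finiteness of $\ker(\sigma)$, discreteness of the image, and hence the theorem, by exactly the formal argument you give.

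The second and more serious problem is that the corrected finiteness statement --- which you yourself identify as ``the heart of the argument'' --- is never proved. Your sketch (bounded distortion of leaf segments bounds displacement in the Lipschitz metric; the lamination fills; an element is determined up to finitely many choices by its effect on finitely many leaf segments) is a list of plausible ingredients, but each claim is itself nontrivial: nothing in the proposal shows why only finitely many elements of $\out$ can realize a given bounded-distortion effect on the lamination, and this properness is precisely the deep content of \cite[Theorem 2.14]{bfh97} and \cite[Theorem 4.4]{kl11} (the latter deduced from an analysis of stabilizers of the attracting tree $T_+^{\vphi}$). Since this theorem is exactly the black-box input the paper takes from the literature, a self-contained proof would have to supply that properness argument in full; as it stands, the proposal reduces the theorem to an unproved (and, in its stated one-sided form, false) assertion.
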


\subsection{Commensurators}{\label{ss:Commensurators}}

Throughout this subsection let $G$ be a group and $H$ a subgroup of $G$.

\begin{df}[Commensurator $Comm_G(H)$]{\label{d:Sigma}}
The \emph{commensurator}, or \emph{virtual normalizer}, of $H$ in $G$ is defined as
$$Comm_G(H):=\{g \in G \mid [H:H \cap g^{-1}Hg] < \infty \text{ and } [g^{-1}Hg : H\cap g^{-1}Hg] < \infty \}.$$
Notice that when $H = \c{a}$, for some $a$, we have 
\begin{equation}\label{eqCom}
Comm_G(\c{a}):=\{g \in G \mid \exists \text{ }m,n \in \Z \text{ so that } g a^n g^{-1}= a^m \}. \end{equation}
\end{df}

\begin{rk}{\label{r:NormalizerSubCommensurator}}
$N_G(H) \leq Comm_G(H)$.
\end{rk}

\begin{prop}\label{cyclicImpliesEqual}
Let $a \in G$. If $Cen_G(\c{a}) \leq H$ and $H$ is cyclic, then $Cen_G(\c{a}) =H$.
\end{prop}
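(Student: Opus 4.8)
The plan is to exploit the elementary fact that the centralizer of $\c{a}$ always contains $a$ itself. Write $C := \ce{G}{a}$. By definition $a$ commutes with every power of itself, so $a \in C$, and by hypothesis $C \leq H$. First I would invoke the assumption that $H$ is cyclic to fix a generator, say $H = \c{h}$. Then $a \in C \leq H = \c{h}$ forces $a = h^k$ for some $k \in \Z$.

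Next I would observe that $\c{h}$ is abelian, so $h$ commutes with every element of $\c{h}$, and in particular with every element of the subgroup $\c{a} = \c{h^k} \leq \c{h}$. Thus $h$ centralizes all of $\c{a}$, i.e. $h \in \ce{G}{a} = C$. This yields $H = \c{h} \leq C$, and combined with the hypothesis $C \leq H$ we conclude $C = H$, as claimed.

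The argument is essentially formal, so I expect no serious obstacle. The one point that requires a moment's care is the passage from ``$h$ commutes with $a$'' to ``$h$ centralizes the entire subgroup $\c{a}$''; this is immediate here only because $\c{a}$ sits inside the abelian group $\c{h}$, but it is the step one should not skip. It is also worth noting the degenerate case $a = 1$: then $\c{a}$ is trivial and $C = G$, so the inclusion $C \leq H$ already forces $G = H = C$ and the conclusion persists. In every case the key leverage is simply that $a$, lying in the cyclic group $H$, is automatically a power of its generator, which is what allows the generator to be recovered inside the centralizer.
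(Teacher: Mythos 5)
Your proof is correct and follows essentially the same route as the paper's: fix a generator $h$ of the cyclic group $H$, note that $a \in Cen_G(\c{a}) \leq H$ forces $a = h^k$, hence $h$ commutes with $a$ (and all its powers), so $h \in Cen_G(\c{a})$ and equality follows. Your version is slightly more careful than the paper's one-line argument in spelling out why commuting with $a$ gives membership in the centralizer of the whole subgroup $\c{a}$, but the idea is identical.
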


\begin{proof}
Since $H$ is cyclic, $H = \c{b}$ for some $b \in H$. Then $a = b^k$ for some $k$ and hence $b$ and $a$ commute. 
\end{proof}

\begin{prop}\label{ComEqNormalizer}
If $a \in G$ and $Comm_G(\c{a})$ is virtually cyclic, then for some $k \in \Z$ we have $N(\c{a^k}) = Comm_G(\c{a})$.
Thus, in this case, $N_G(\c{a}) \leq Comm_G(\c{a}) = N_G(\c{a^k})$. 
\end{prop}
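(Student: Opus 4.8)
The plan is to write $C := Comm_G(\c{a})$ and to produce the required $k$ by exhibiting a finite-index subgroup of $C$ that is \emph{normal} in $C$ and \emph{contained} in $\c{a}$. Since every subgroup of $\Z$ is generated by a power of $a$, such a subgroup is automatically of the form $\c{a^k}$, and its normality in $C$ will give the inclusion $C \leq N_G(\c{a^k})$. I treat the relevant case in which $a$ has infinite order; when $a$ has finite order one simply takes $k$ to be that order, so that $\c{a^k}$ is trivial and $N_G(\c{a^k}) = G = C$.

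First I would show that $\c{a}$ has finite index in $C$. Note that $\c{a} \leq C$ (as $a$ normalizes, hence commensurates, $\c{a}$) and that $C$ is infinite, since it contains the infinite cyclic group $\c{a}$. As $C$ is virtually cyclic I may choose a finite-index infinite cyclic subgroup $\c{c} \leq C$. Then $\c{a} \cap \c{c}$ has finite index in $\c{a}$, being the intersection of $\c{a}$ with a finite-index subgroup; in particular it is nontrivial, so it is an infinite cyclic, and therefore finite-index, subgroup of $\c{c} \cong \Z$ as well. Multiplying indices gives $[C : \c{a}\cap\c{c}] = [C:\c{c}]\,[\c{c}:\c{a}\cap\c{c}] < \infty$, whence $[C:\c{a}] < \infty$.

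Next I would pass to the normal core $N := \bigcap_{g \in C} g\c{a}g^{-1}$. Because $\c{a}$ has finite index, say $d$, in $C$, the action of $C$ on the coset space $C/\c{a}$ furnishes a homomorphism $C \to S_d$ (the symmetric group on $d$ letters) whose kernel is exactly $N$; thus $N$ is normal in $C$ and of finite index. Since $N \leq \c{a} \cong \Z$ and $N$ is infinite (it has finite index in the infinite group $C$), it is a nontrivial subgroup of $\c{a}$ and so equals $\c{a^k}$ for some $k \geq 1$. Normality of $N = \c{a^k}$ in $C$ is precisely the statement $C \leq N_G(\c{a^k})$.

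Finally I would verify the reverse inclusion $N_G(\c{a^k}) \leq C$, which in fact holds for every nonzero $k$ and uses no virtual cyclicity: if $g \in N_G(\c{a^k})$ then $g\c{a^k}g^{-1} = \c{a^k}$, and since $\c{a^k}$ is infinite cyclic its only generators are $a^{\pm k}$, so $g a^k g^{-1} = a^{\pm k}$; taking $n = k$ and $m = \pm k$ in \eqref{eqCom} shows $g \in Comm_G(\c{a}) = C$. Combining the two inclusions yields $N_G(\c{a^k}) = C$, and the displayed chain $N_G(\c{a}) \leq Comm_G(\c{a}) = N_G(\c{a^k})$ then follows from Remark \ref{r:NormalizerSubCommensurator}. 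The only substantive point is the first step; the main obstacle is to argue cleanly that an infinite cyclic subgroup of a virtually cyclic group is automatically of finite index, after which the normal-core construction does all the remaining work.
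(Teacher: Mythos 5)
Your proof is correct, and it takes a genuinely different route from the paper's. The paper argues element-by-element: for $b \in Comm_G(\c{a})$ it picks $k,m$ with $ba^kb^{-1}=a^m$, then compares indices --- using that conjugation by $b$ is an automorphism of $Comm_G(\c{a})$, hence index-preserving --- to force $|m|=|k|$, so that $b$ normalizes $\c{a^k}$. You instead construct the subgroup globally: the normal core of $\c{a}$ in $C=Comm_G(\c{a})$, realized as the kernel of the action on the finite coset space $C/\c{a}$, is a finite-index normal subgroup of $C$ contained in $\c{a}\cong\Z$, hence equals $\c{a^k}$ for a single $k$, and normality gives $C\leq N_G(\c{a^k})$ at once. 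This buys you something concrete: in the paper's argument the exponent $k$ a priori depends on the element $b$, and an extra step (passing to the core, or taking an lcm over coset representatives using that $b$ then normalizes $\c{a^{jk}}$ for all $j$) is needed to extract one $k$ valid for all of $Comm_G(\c{a})$; your construction yields a uniform $k$ from the start. You also justify two points the paper takes for granted: that $[Comm_G(\c{a}):\c{a}]<\infty$ (the paper simply writes ``let $\c{a}$ have index $n$''), which you derive from virtual cyclicity by intersecting $\c{a}$ with a finite-index infinite cyclic subgroup, and the degenerate case where $a$ has finite order. For the reverse inclusion the two arguments are essentially equivalent: the paper cites $N_G(\c{a^k})\leq Comm_G(\c{a^k})\leq Comm_G(\c{a})$, i.e.\ that commensurators are insensitive to replacing $\c{a}$ by a finite-index subgroup, while you verify it directly from the fact that the only generators of the infinite cyclic group $\c{a^k}$ are $a^{\pm k}$.
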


\begin{proof}
First notice that $N_G(\c{a^k}) \leq Comm_G(\c{a^k}) \leq Comm_G(\c{a})$. Hence, we are left to show $Comm_G(\c{a}) \leq N_G(\c{a^k})$. Let $\c{a}$ have index $n$ in the group $Comm_G(\c{a})$. Let $b \in Comm_G(\c{a})$ and let $\omega_b \in Aut(G)$ denote conjugation by $b$. By (\ref{eqCom}), there exist $k,m \in \Z$ so that $ba^kb^{-1} = a^m$, hence $\omega_b(\c{a^k}) = \c{a^m}$. Now $n|m| = [Comm_G(\c{a}):\c{a^m}] = [Comm_G(\c{a}): \omega_b(\c{a^k})] = [\omega_b(Comm_G(\c{a})): \omega_b(\c{a^k})] = n|k|$. Hence, $|m|=|k|$ and so $b \in N_G(\c{a^k})$. 
\end{proof}

\section{The sequence of inclusions for a fully irreducible outer automorphism.}\label{s:Normalizer} 

\begin{con}[$\langle \vphi \rangle$, $\cent$, $\norm$] Given an element  $\vphi \in \out$, we let $\langle \vphi \rangle$ denote the cyclic subgroup generated by $\vphi$, we let $\cent$ denote its centralizer in $\out$, and we let $\norm$ denote its normalizer in $\out$.
\end{con}

\begin{lem}{\label{L:CommensuratorStabilizer}}
Let $\vphi \in Out(F_r)$ be fully irreducible.
Then: 
\begin{enumerate}
\item Each element $\psi \in \cent$ fixes the ordered pair $(T^+_{\vphi}, T^-_{\vphi})$ and the ordered pair $(\Lambda^+_{\vphi}, \Lambda^-_{\vphi})$. In particular, $\cent < \stL$.

\item $\com = Stab(\{\Lambda^+_{\vphi}, \Lambda^-_{\vphi}\}) = Stab(\{T^+_{\vphi}, T^-_{\vphi}\})$. 
And, in particular, each element $\psi \in \norm$ fixes the unordered pair $\{T^+_{\vphi}, T^-_{\vphi}\}$ and the unordered pair $\{\Lambda^+_{\vphi}, \Lambda^-_{\vphi}\}$.
\end{enumerate}
\end{lem}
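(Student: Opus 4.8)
The strategy is to exploit the equivariance formula \eqref{eqLamAction} together with the analogous statement for the attracting and repelling trees, and then to translate the algebraic conditions defining $\cent$, $\norm$, and $\com$ into fixed-point statements on $\overline{\os}$ and on the set of laminations.

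For part~(1), suppose $\psi \in \cent$, so $\psi \vphi = \vphi \psi$, equivalently $\psi \vphi \psi^{-1} = \vphi$. Applying \eqref{eqLamAction} gives $\psi \cdot (\Lambda^+_\vphi, \Lambda^-_\vphi) = (\Lambda^+_{\psi\vphi\psi^{-1}}, \Lambda^-_{\psi\vphi\psi^{-1}}) = (\Lambda^+_\vphi, \Lambda^-_\vphi)$, so $\psi$ fixes the ordered pair of laminations; in particular $\psi$ fixes $\Lambda^+_\vphi$ and hence $\psi \in \stL$, giving $\cent \leq \stL$. The tree statement is parallel: since $\vphi$ acts on $\overline{\os}$ with North-South dynamics (Definition~\ref{d:AttractingTree}), the attracting and repelling fixed points are canonically associated to $\vphi$, and the $\out$-action satisfies $(\psi\vphi\psi^{-1})$ has attracting tree $\psi \cdot T^+_\vphi$ (and similarly for $T^-$), so $\psi\vphi\psi^{-1} = \vphi$ forces $\psi \cdot T^+_\vphi = T^+_\vphi$ and $\psi \cdot T^-_\vphi = T^-_\vphi$. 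I would state this tree-equivariance explicitly as the analogue of \eqref{eqLamAction}.

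For part~(2), I would prove the two set-equalities by mutual inclusion, handling the laminations and trees in the same way. If $\psi \in \com$, then by \eqref{eqCom} there exist $m,n$ with $\psi \vphi^n \psi^{-1} = \vphi^m$. Now $\vphi^n$ and $\vphi^m$ are nontrivial powers of a fully irreducible, so they share the lamination pair of $\vphi$ up to the $\pm$ swap: $\Lambda^\pm_{\vphi^n} = \Lambda^\pm_\vphi$ when the exponent is positive and the pair is swapped when it is negative (and likewise $T^\pm_{\vphi^n} = T^\pm_\vphi$ up to swap). Applying \eqref{eqLamAction} to the conjugacy relation then shows $\psi$ sends the set $\{\Lambda^+_\vphi, \Lambda^-_\vphi\}$ to itself, so $\com \leq Stab(\{\Lambda^+_\vphi, \Lambda^-_\vphi\})$, and symmetrically for trees. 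For the reverse inclusions, if $\psi$ fixes the unordered pair $\{T^+_\vphi, T^-_\vphi\}$, then $\psi\vphi\psi^{-1}$ is again fully irreducible with attracting/repelling trees among $\{T^+_\vphi, T^-_\vphi\}$; since a fully irreducible is determined up to taking a commensurable power by its tree pair (this is where I would invoke that $\stL$ is virtually cyclic, Theorem~\ref{t:stabilizerlamination}, to conclude $\psi\vphi\psi^{-1}$ and $\vphi$ have a common power), one gets $\psi\vphi^n\psi^{-1} = \vphi^{\pm m}$ for suitable $n,m$, whence $\psi \in \com$. The ``in particular'' clause about $\norm$ is then immediate from $\norm \leq \com$ (Remark~\ref{r:NormalizerSubCommensurator}) and the fact that conjugation by a normalizing element permutes $\{T^\pm_\vphi\}$ and $\{\Lambda^\pm_\vphi\}$.

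The main obstacle I anticipate is the reverse inclusion in part~(2): passing from ``$\psi$ fixes the unordered tree/lamination pair'' back to the algebraic commensurability relation $\psi\vphi^n\psi^{-1} = \vphi^{\pm m}$. The clean way to do this is to show that the stabilizer of the pair $\{T^+_\vphi, T^-_\vphi\}$ (equivalently of $\{\Lambda^+_\vphi, \Lambda^-_\vphi\}$) is itself virtually cyclic containing $\c{\vphi}$ as a finite-index subgroup, so that any element fixing the pair automatically lies in $\com$; here the virtual cyclicity of $\stL$ from Theorem~\ref{t:stabilizerlamination} is the crucial input, since it forbids the stabilizer from being too large. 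The only subtlety is bookkeeping the $\pm$ sign (ordered versus unordered pairs), which corresponds exactly to whether $\psi$ commensurates $\vphi$ to a positive or negative power, and this is what distinguishes $\cent$/$\stL$ (ordered) from $\norm$/$\com$ (unordered).
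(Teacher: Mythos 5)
Your proposal is correct, and on the easy inclusions it coincides with the paper's proof: part (1) and the inclusion $\com \leq Stab(\{\Lambda^+_{\vphi}, \Lambda^-_{\vphi}\})$ are obtained, exactly as in the paper, from Equation (\ref{eqLamAction}) together with Equation (\ref{eqCom}), and the tree statements are transferred across by equivariance (the paper does this by citing \cite[Lemma 3.5]{bfh97}; you do it by stating the tree analogue of Equation (\ref{eqLamAction}) -- the same fact). Where you genuinely diverge is at the key reverse inclusion $Stab(\{\Lambda^+_{\vphi}, \Lambda^-_{\vphi}\}) \leq \com$: the paper gets this in one line by citing \cite[Proposition 2.16]{bfh97}, whereas you derive it from the virtual cyclicity of $\stL$ (Theorem \ref{t:stabilizerlamination}) by elementary group theory. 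Your derivation is sound: $Stab(\{\Lambda^+_{\vphi}, \Lambda^-_{\vphi}\})$ contains $Stab(\Lambda^+_{\vphi}) \cap Stab(\Lambda^-_{\vphi})$ with index at most $2$, hence is virtually cyclic; $\vphi$ has infinite order (no power of a fully irreducible is the identity), so $\c{\vphi}$ is an infinite cyclic subgroup of an infinite virtually cyclic group and therefore has finite index in it; and if $g$ stabilizes the unordered pair, conjugation by $g$ is an automorphism of this stabilizer, so $g\c{\vphi}g^{-1}$ and $\c{\vphi}$ are two finite-index subgroups of it and hence commensurable, i.e. $g \in \com$. This is the same mechanism the paper exploits in Proposition \ref{ComEqNormalizer}, and it buys you a self-contained argument (modulo the already-quoted Theorem \ref{t:stabilizerlamination}) with transparent bookkeeping of the ordered-versus-unordered sign, at the cost of being longer than the paper's black-box citation. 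The one place you should tighten the write-up is the tree half of part (2): Theorem \ref{t:stabilizerlamination}, as stated, concerns the lamination stabilizer, so ``symmetrically for trees'' conceals a needed ingredient -- either the lamination--tree correspondence of \cite[Lemma 3.5]{bfh97}, which identifies $Stab(\{T^{+}_{\vphi}, T^{-}_{\vphi}\})$ with $Stab(\{\Lambda^{+}_{\vphi}, \Lambda^{-}_{\vphi}\})$ (this is how the paper transfers between the two), or a separate virtual-cyclicity statement for tree stabilizers; without one of these, your argument literally yields only $\com = Stab(\{\Lambda^+_{\vphi}, \Lambda^-_{\vphi}\}) \leq Stab(\{T^+_{\vphi}, T^-_{\vphi}\})$ and leaves the inclusion $Stab(\{T^+_{\vphi}, T^-_{\vphi}\}) \leq \com$ unproved.
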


\begin{proof}
(1) Let $\psi \in \cent$. Then by Equation \ref{eqLamAction} we have $\psi \cdot (\Lambda^+_{\vphi}, \Lambda^-_{\vphi}) = (\Lambda^+_{\vphi}, \Lambda^-_{\vphi})$.
That $\psi$ fixes the ordered pair $(T^+_{\vphi}, T^-_{\vphi})$ now follows from \cite[Lemma 3.5]{bfh97}.

(2) By Equations \ref{eqLamAction} and \ref{eqCom} we have 
$$\com \leq Stab(\{\Lambda^+_{\vphi}, \Lambda^-_{\vphi}\})$$
and by \cite[Lemma 3.5]{bfh97} this implies that 
$\com \leq Stab(\{T^+_{\vphi}, T^-_{\vphi}\})$. 
Now suppose $\psi \in Stab(\{\Lambda^+_{\vphi}, \Lambda^-_{\vphi}\})$. 
Then, by Equation \ref{eqLamAction} and \cite[Proposition 2.16]{bfh97}, we know $\psi\vphi^k\psi^{-1}=\vphi^n$ for some $k,n \in \Z$. So $\psi \in \com$.
\qedhere
\end{proof}

\begin{cor}\label{centStl}
If $\vphi \in \out$ is fully irreducible then there exists an integer $k \in \Z$ so that  
\[ \cent \leq \stL \leq \com = \norma{k}.\] 
Moreover, 
\begin{enumerate}
\item The subgroup in the right-hand inequality has index $\leq$ 2.
\item  If $\stL$ is cyclic then the left-hand inequality is an equality. 
\end{enumerate}
\end{cor}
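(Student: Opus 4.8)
The plan is to assemble the displayed chain from the lemmas already proved and then to extract both the index bound and the cyclicity statement from a single permutation homomorphism. First I would record the two easy inclusions. That $\cent \leq \stL$ is immediate from Lemma~\ref{L:CommensuratorStabilizer}(1). For $\stL \leq \com$, I would observe that any $\psi \in \stL$ satisfies $\Lambda^+_{\psi\vphi\psi^{-1}} = \psi \cdot \Lambda^+_{\vphi} = \Lambda^+_{\vphi}$ by Equation~\ref{eqLamAction}, so \cite[Proposition 2.16]{bfh97} yields $\psi\vphi^k\psi^{-1} = \vphi^n$ for some $k,n \in \Z$, i.e.\ $\psi \in \com$. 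This is exactly the computation already carried out inside the proof of Lemma~\ref{L:CommensuratorStabilizer}(2).

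Next I would set up the homomorphism controlling the index. By Lemma~\ref{L:CommensuratorStabilizer}(2) we have $\com = Stab(\{\Lambda^+_{\vphi}, \Lambda^-_{\vphi}\})$, so every element of $\com$ induces a permutation of the two-element set $\{\Lambda^+_{\vphi}, \Lambda^-_{\vphi}\}$; this defines a homomorphism $\com \to \Z_2$. I claim its kernel is precisely $\stL$. Indeed, a kernel element fixes both $\Lambda^+_{\vphi}$ and $\Lambda^-_{\vphi}$, hence lies in $\stL$; conversely, if $\psi \in \stL$ then (using $\stL \leq \com$ from the first paragraph) $\psi$ permutes $\{\Lambda^+_{\vphi}, \Lambda^-_{\vphi}\}$ and fixes $\Lambda^+_{\vphi}$, so by injectivity it must also fix $\Lambda^-_{\vphi}$, placing it in the kernel. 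Therefore $\com/\stL$ embeds in $\Z_2$ and $[\com : \stL] \leq 2$, which is statement~(1).

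With the index bound in hand, $\com$ is virtually cyclic: it contains $\stL$, virtually cyclic by Theorem~\ref{t:stabilizerlamination}, as a subgroup of index at most $2$. I may then invoke Proposition~\ref{ComEqNormalizer} to produce an integer $k$ with $\com = \norma{k}$, completing the chain $\cent \leq \stL \leq \com = \norma{k}$.

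Finally, for statement~(2), suppose $\stL$ is cyclic. Since $\vphi \in \cent \leq \stL$, Proposition~\ref{cyclicImpliesEqual} applied with $a = \vphi$ and $H = \stL$ forces $\cent = \stL$. The only point requiring genuine care is the kernel computation in the second paragraph, specifically the injectivity argument showing that an element of $\com$ fixing $\Lambda^+_{\vphi}$ must also fix $\Lambda^-_{\vphi}$; the remaining steps are direct citations of the preceding lemmas and propositions.
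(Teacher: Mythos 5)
Your proof is correct and follows essentially the same route as the paper's: both arguments rest on Lemma~\ref{L:CommensuratorStabilizer}, the index-$2$ comparison between the stabilizer of the ordered pair and of the unordered pair $\{\Lambda^+_{\vphi},\Lambda^-_{\vphi}\}$, Proposition~\ref{ComEqNormalizer} (applied after noting $\com$ is virtually cyclic) to get $\com = \norma{k}$, and Proposition~\ref{cyclicImpliesEqual} for part~(2). The only difference is one of detail: the paper simply asserts $\stL = Stab(\Lambda^+_{\vphi},\Lambda^-_{\vphi})$, whereas you justify this step --- that an element of $\com$ fixing $\Lambda^+_{\vphi}$ must also fix $\Lambda^-_{\vphi}$ --- via the kernel computation for the permutation homomorphism $\com \to \Z_2$, which fills in a point the paper leaves implicit.
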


\begin{proof}
\begin{enumerate}
\item $\stL = Stab(\Lam^+_\vphi, \Lam^-_\vphi)$. By Lemma \ref{L:CommensuratorStabilizer}(2), $\com = Stab(\{\Lam^+_\vphi, \Lam^-_\vphi \})$. Thus, $\vert \com : \stL \vert \leq 2$. By Proposition \ref{ComEqNormalizer}, there exists a $k \in \Z$ such that  $\com = \norma{k}$.
\item This follows from Proposition \ref{cyclicImpliesEqual}.
\end{enumerate}
\end{proof}

\begin{ex}\label{example1}
We work out an example where $\cent   = Cen(\langle\vphi^2\rangle) \lneq N(\langle\vphi^2\rangle) 
\leq \com$. Recall that $\text{Out}(F_2) \cong \gl$ via the abelianization map. 
Thus, it suffices to carry out the computations in $\gl$. Consider,
\[ A = \begin{pmatrix} 0 & 1\\1 & 1 \end{pmatrix}, 
\quad \quad  B = A^2 = \begin{pmatrix} 1 & 1\\1 & 2 \end{pmatrix},  \quad \quad P = \begin{pmatrix} 0 & 1 \\ -1 & 0 \end{pmatrix}.  
\]
The image $\bar{A}$ of $A$ in $\pgl$ acts on the hyperbolic plane by hyperbolic isometries fixing the points $\lam, -\frac{1}{\lam} \in \RR$. Each element of $\text{Stab}_{\pgl}(\lam, -\frac{1}{\lam})$ preserves the hyperbolic geodesic between $\lam$ and $-\frac{1}{\lam}$, we denote this by $[\lam,-\frac{1}{\lam}]$. 
Thus, the map $\rho \from \text{Stab}_{\pgl}(\lam, -\frac{1}{\lam}) \to (\RR,+)$ sending an element to its signed translation length on $[\lam,-\frac{1}{\lam}]$ is a homomorphism. Moreover, its image is discrete and its kernel is trivial. Hence, $\text{Stab}(\lam, -\frac{1}{\lam})$ is infinite cyclic and, by Proposition \ref{cyclicImpliesEqual}, $\text{Stab}(\lam, -\frac{1}{\lam})= Cen_{\pgl}(\c{\bar{A}})$. Since $\bar{A} \in \pgl$ is primitive then  $Cen_{\pgl}(\c{\overline{A}}) = \c{\bar{A}}$.  
One can check directly that $Cen_{\gl}(\c{A}) = \c{A,-I}$, where $I$ denotes the identity matrix (this follows since $\gl \to \pgl$ is 2-to-1). 
Similarly, $Cen_{\pgl}(\c{\bar{A}^2})$ is infinite cyclic and $\bar{A}$ is a primitive element of this group, hence  $Cen_{\pgl}(\c{\bar{A}^2}) =  \c{\bar{A}}$. Again, $Cen_{\gl}(\c{A^2}) = \c{A,-I}$. 
Moreover, one can check directly that $P \in N_{\gl}(\c{A^2})- N_{\gl}(\c{A})$. Hence, $Comm_{\gl}(\c{A}) \geq N_{\gl}(\c{A^2}) \geq \c{A,-I,P}$. 
\end{ex}

\begin{ex}{\label{example2}}
We show that there exists an ageometric fully irreducible outer automorphism $\vphi$ such that $\cent \ncong \Z$, and moreover $\cent \ncong \Z \times \Z_2$ (as in $\text{Out}(F_2)$, whose center is $\Z_2$). Consider $F_3 = \c{a,b,c}$. Let $R_3$ be the $3$-petaled rose and define
\[ \Psi: a \to b \to c \to ab. \]
It is straight-forward (see \cite[Proposition 4.1]{IWGII}) to check that this map represents an ageometric fully irreducible outer automorphism. 
Denote by $\Delta$ the 3-fold cover corresponding to the subgroup
\[ \c{b,c,a^3, abA, acA, a^2bA^2, a^2cA^2}.\]
We claim that $\Psi^{13}$ lifts to $\Delta$. Indeed, let $A$ be the transition matrix of $\Psi$, then 
\[ A^{13} = \begin{pmatrix}
7&9&12 \\
12&16&21\\
9&12&16
\end{pmatrix}.\]
In particular, both $\Psi(b)$ and $\Psi(c)$ cross $a$ a multiple of 3 times. Thus $\Psi^{13}$ lifts to $\Delta$. Denote the vertices of $\Delta$ by $v_0, v_1, v_2$. We denote by $g \from \Delta \to \Delta$ the lift of $\Psi^{13}$ that sends $v_0$ to itself. Let $T \from \Delta \to \Delta$ denote the deck transformation sending $v_0$ to $v_1$. The action of $T$ on $H_1(\Delta, \Z)$ is nontrivial, so $T$ does not represent an inner automorphism. 
Moreover, we claim that $g \circ T = T \circ g$. First note that the maps $g \circ T$ and $T \circ g$ are both lifts of $\Psi^{13}$. Moreover, since $a$ appears in $\Psi^{13}(a)$ 7 times (see the matrix $A^{13}$), $g(v_1) = v_1$. Therefore,
\[ g \circ T(v_0) = g(v_1) = v_1 = T(v_0) = T \circ g(v_0). \]
Thus, $g \circ T = T \circ g$. Let $\vphi \in \text{Out}(F_7)$ be the outer automorphism represented by $g$, and $\theta$ the outer automorphism represented by $T$. 
An elementary computation shows that $g$ is an irreducible train track map and that each local Whitehead graph is connected. Moreover, a PNP for $g$ would descend to a PNP for $\Psi$. Since $\Psi$ contains no such paths, there are no PNPs for $g$. Thus, the outer automorphism $\vphi$ is ageometric and fully irreducible (see \cite[Proposition 4.1]{IWGII}). In conclusion, $\theta$ is an order-3 element in $\ce{\text{Out}(F_7)}{\vphi}$, in contrast to the conclusion of our theorem for a lone axis ageometric fully irreducible outer automorphism. 
\end{ex}

\begin{ex}\label{example3}
In this example $\cent \lneq \stL$. 
Consider $\Psi$ as in Example \ref{example2} with its transition matrix $A$. We have:
\[ A^{16} = \begin{pmatrix}
16&21&28 \\
28&37&49\\
21&28&37
\end{pmatrix}\]
Thus, $\Psi^{16}$ lifts to a cover $\Delta$ corresponding  to  the index 7 subgroup of the free group 
\[\c{b,c,aba^{-1}, aca^{-1}, a^2ba^{-2}, a^2ca^{-2}, \dots ,a^6ba^{-6} ,a^6ca^{-6}, a^7}.\]
Number the vertices of $\Delta$ by $v_0, \dots v_6$. Let $g: \Delta \to \Delta$ be the lift of $\Psi^{16}$ fixing $v_0$. Since $\Psi^{16}(a)$ crosses $a$ a multiple of 16 times, $g(v_1) = v_2$. Thus, if $T$ is an order 7 deck transformation such that $T(v_i) = v_{i+1 \mod 7}$ then $g \circ T \neq T \circ g$. Let $\vphi$ denote the automorphism represented by $g$. Then as in the previous example, $\vphi$ is an ageometric fully irreducible outer automorphism. The lamination $\Lam^+_\vphi$ is a lift of the lamination $\Lam^+_\psi$ and therefore it is preserved by $T$. Thus $\theta$, the automorphism represented by $T$, is contained in $\stL$. But $\theta \notin \cent$. 
\end{ex}

\section{Proof of Main Theorems}{\label{s:Proof}}

\begin{lem}{\label{L:Fixers}}
Let $\vphi \in \out$ be an ageometric lone axis fully irreducible outer automorphism. If $\psi \in \out$ is an outer automorphism fixing the pair $(T^+_{\vphi}, T^-_{\vphi})$, then $\psi$ fixes $\mA_{\vphi}$ as a set, and also preserves its orientation.  
\end{lem}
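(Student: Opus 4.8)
The plan is to exploit the defining property of the axis bundle together with the fact that $\out$ acts on $\os$, and on its compactification $\ol{\os}$, by homeomorphisms. First I would observe that post-composing any fold line $\mathcal{F} \from \RR \to \os$ with the action of $\psi$ produces a path $\mathcal{F}\cdot\psi$ which, being the composite of a continuous path with a homeomorphism, is again continuous, injective, and proper. I would then check that $\mathcal{F}\cdot\psi$ is again a fold line in the sense of Definition \ref{dfFoldLines}. Writing $\mathcal{F}(t)=\Gamma_t=(G_t,m_t,\ell_t)$ with homotopy equivalences $h_{ts}$, the graphs and metrics underlying $\Gamma_t\cdot\psi$ are unchanged (only the marking $m_t$ is replaced by $m_t\circ\Psi$ for a lift $\Psi$ of $\psi$), so the maps $h_{ts}$ remain local isometries on each edge (Train track property) and continue to satisfy the Semiflow property. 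They also remain marking-preserving, since $h_{ts}\circ m_s\simeq m_t$ implies $h_{ts}\circ(m_s\circ\Psi)\simeq m_t\circ\Psi$. Hence $\psi$ carries fold lines to fold lines.

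Next I would track the endpoints in $\ol{\os}$. If $\mathcal{F}(t)$ converges to $T^-_\vphi$ as $t\to-\infty$ and to $T^+_\vphi$ as $t\to+\infty$, then continuity of the $\psi$-action on $\ol{\os}$ gives that $(\mathcal{F}\cdot\psi)(t)=\mathcal{F}(t)\cdot\psi$ converges to $T^-_\vphi\cdot\psi$ and $T^+_\vphi\cdot\psi$ respectively. Since $\psi$ fixes the ordered pair $(T^+_\vphi,T^-_\vphi)$, these limits are again $T^-_\vphi$ and $T^+_\vphi$, so $\mathcal{F}\cdot\psi$ is again an axis in the sense of Definition \ref{d:Axes}. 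Thus $\psi$ maps every axis of $\mA_\vphi$ to an axis of $\mA_\vphi$; applying the same reasoning to $\psi^{-1}$, which likewise fixes the ordered pair, shows $\psi(\mA_\vphi)=\mA_\vphi$ as sets. Because $\vphi$ is a lone axis fully irreducible, $\mA_\vphi$ is a single fold line, and $\psi$ restricts to a self-homeomorphism of this line.

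Finally, for orientation I would orient the unique axis in the direction of the flow, namely from the end converging to $T^-_\vphi$ toward the end converging to $T^+_\vphi$. A self-homeomorphism of a line is either orientation-preserving or orientation-reversing, and it reverses orientation precisely when it interchanges the two ends. Since $T^+_\vphi\neq T^-_\vphi$ (they are the distinct attracting and repelling fixed points of the North-South dynamics), the $+\infty$-end is the unique end converging to $T^+_\vphi$; as $\psi$ sends it to an end converging to $T^+_\vphi\cdot\psi=T^+_\vphi$, it cannot swap the ends, and hence preserves the orientation.

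I expect the only genuinely delicate point to be the verification that the $\psi$-image of a fold line is again a fold line --- in particular that the marking-preserving, Train track, and Semiflow conditions all survive the marking change induced by $\psi$. Everything else is a formal consequence of $\psi$ acting as a homeomorphism of $\ol{\os}$ fixing both $T^+_\vphi$ and $T^-_\vphi$; the lone axis hypothesis is used only to guarantee that $\mA_\vphi\cong\RR$, so that ``orientation'' is meaningful and is pinned down by the ordered pair of endpoints.
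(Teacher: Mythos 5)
Your proposal is correct and follows essentially the same route as the paper's proof: both arguments show that the $\psi$-image of a fold line is again a fold line (with the same semiflow family, only the markings changing), note that the endpoints at infinity go to $T^{\mp}_{\vphi}\cdot\psi = T^{\mp}_{\vphi}$, and then invoke uniqueness of the axis together with the fact that the \emph{ordered} pair is fixed to get both the set-wise invariance and the preservation of orientation. Your write-up merely makes explicit some verifications (marking-preservation, properness, the end-swapping argument) that the paper leaves implicit.
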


\begin{proof} $\mA_{\vphi}$ consists precisely of all fold lines $\mathcal{F} \colon \mathbb{R} \to \os$ such that $\mathcal{F}(t)$ converges in $\overline{\os}$ to $T^{-}_{\varphi}$ as $t \to -\infty$ and to $T^{+}_{\varphi}$ as $t \to +\infty$. Further, since $\vphi \in \out$ is a lone axis fully irreducible outer automorphism, there is only one such fold line. Hence, since $\psi$ fixes $(T^+_{\vphi}, T^-_{\vphi})$, it suffices to show that the image of the single fold line $\mA_{\vphi}$ under $\psi$ is a fold line. 
Indeed given the fold line $t \to \G_t$ with the semi-flow family $\{h_{ts}\}$, the new fold line is just $t \to \G_t\cdot \psi$ with the same family of homotopy equivalences $\{h_{ts}\}$. Hence the properties of Definition \ref{dfFoldLines} still hold.  
\qedhere
\end{proof}

Recall that $\mA_\vphi$ is a directed geodesic and suppose that the map
$t \to \G_t$ is a parametrization of $\mA_\vphi$ according to arc-length with respect to the Lipschitz metric, i.e.
\begin{equation}\label{eqParametrization}
d(\G_t, \G_{t'}) = t'-t \quad \quad \text{for } t'>t.
\end{equation} 

\begin{lem}\label{defOfRho}
Let $\vphi \in \out$ be an ageometric lone axis fully irreducible outer automorphism and $\psi \in \stL$, then there exists a number $\rho(\psi) \in \RR$ so that for all $t \in \RR$, we have $\psi(\G_t) = \G_{\rho(\psi) + t}$. 
\end{lem}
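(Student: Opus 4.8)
The plan is to show that $\psi$ acts on the image of the single fold line $\mA_\vphi$ as a translation, and then to identify the translation amount $\rho(\psi)$. The key input is Lemma \ref{L:Fixers}: since $\psi \in \stL$, Lemma \ref{L:CommensuratorStabilizer}(1) would not quite apply (that is for $\cent$), but the stabilizer of $\Lambda^+_\vphi$ fixes $T^+_\vphi$ by \cite[Lemma 3.5]{bfh97}, and since $\psi$ preserves the ordered pair of laminations it preserves the ordered pair $(T^+_\vphi, T^-_\vphi)$. Hence Lemma \ref{L:Fixers} tells me that $\psi$ maps $\mA_\vphi$ to itself as an oriented set. So $t \to \G_t \cdot \psi$ is again a parametrization of the \emph{same} directed geodesic, tracing it out in the same direction.

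First I would record that, because $\mA_\vphi$ is the unique axis and $\psi$ preserves it with orientation, the composite $t \mapsto \psi(\G_t)$ is a point of the image of $\mA_\vphi$ for every $t$. Writing $\psi(\G_t) = \G_{f(t)}$ defines a function $f \from \RR \to \RR$, using that the parametrization $t \mapsto \G_t$ is injective (a fold line is an injective proper map). The second step is to show $f$ is an isometry of $\RR$ preserving orientation, i.e. $f(t) = t + \rho(\psi)$ for a constant $\rho(\psi)$. For this I would use that $\psi$ acts on $\os$ by an isometry of the Lipschitz metric (the $\out$-action is by isometries in the right-invariant sense), so combining with the arc-length parametrization \eqref{eqParametrization} I get, for $t' > t$,
\[
f(t') - f(t) = d(\G_{f(t)}, \G_{f(t')}) = d(\psi(\G_t), \psi(\G_{t'})) = d(\G_t, \G_{t'}) = t' - t,
\]
where the orientation-preservation from Lemma \ref{L:Fixers} guarantees $f(t') > f(t)$ so that the signed and unsigned distances agree. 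This forces $f(t') - f(t) = t' - t$ for all $t' > t$, whence $f(t) - t$ is constant; call it $\rho(\psi)$.

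The main subtlety I expect is the metric step: the asymmetry of the Lipschitz metric means I must be careful that $d(\psi(\G_t), \psi(\G_{t'}))$ equals $d(\G_t, \G_{t'})$ for the correct ordering, which relies on $\psi$ acting as a (right-multiplication) isometry of $(\os, d)$ and on $\psi$ preserving the direction of the geodesic so that I compute distances in the same order on both sides. Orientation-preservation is exactly what Lemma \ref{L:Fixers} supplies, so the two ingredients fit together. Once $f(t) = t + \rho(\psi)$ is established the statement $\psi(\G_t) = \G_{\rho(\psi) + t}$ follows immediately, and the notation $\rho(\psi)$ is set up so that the next result can verify $\rho$ is a homomorphism to $(\RR,+)$.
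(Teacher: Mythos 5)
Your proposal is correct and follows essentially the same route as the paper: invoke Lemma \ref{L:Fixers} (via $\stL = \text{Stab}(T^+_\vphi, T^-_\vphi)$) to get that $\psi$ preserves the unique axis with orientation, define $f$ by $\psi(\G_t)=\G_{f(t)}$, and use the arc-length parametrization \eqref{eqParametrization} together with the fact that $\psi$ acts as a Lipschitz-metric isometry to force $f(t')-f(t)=t'-t$, hence $f(t)=t+\rho(\psi)$. Your extra care about the asymmetry of the metric and the role of orientation-preservation is exactly the point the paper's chain of equalities relies on.
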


\begin{proof}
$\stL = \text{Stab}(T^+_\vphi, T^-_\vphi)$ and by Lemma \ref{L:Fixers}, $\psi(\mA_\vphi) = \mA_\vphi$ and $\psi$ preserves the direction of the fold line. Therefore, there exists a strictly monotonically increasing surjective function $f \from \RR \to \RR$ so that $\psi(\G_t) = \G_{f(t)}$. Moreover, since $\psi$ is an isometry with respect to the Lipschitz metric, for $t< t'$, since $f(t)< f(t')$, Equation (\ref{eqParametrization}) implies
\[ f(t') - f(t) = d(\G_{f(t)}, \G_{f(t')}) = d(\psi(\G_{t}), \psi(\G_{t'})) = 
d(\G_t, \G_{t'}) = t'-t. \]
Hence $f(t') = f(t) + t' - t$. This implies that for all $s \in \RR$, 
$f(s) = f(0)+s$. Define $\rho(\psi) = f(0)$, then
\[ \psi(\G_t) = \G_{f(t)} = \G_{f(0)+t} = \G_{\rho(\psi) + t}. \qedhere \]  
\end{proof}

\begin{lem}{\label{L:Homomorphism}}
Let $\vphi \in \out$ be an ageometric lone axis fully irreducible outer automorphism, then the map $\rho \from \stL \to (\RR,+)$ is a homomorphism.
\end{lem}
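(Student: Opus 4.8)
The plan is to verify the homomorphism property $\rho(\psi_1\psi_2) = \rho(\psi_1) + \rho(\psi_2)$ directly, by comparing the two ways of letting the product $\psi_1\psi_2$ act on the parametrized axis. Fix $\psi_1, \psi_2 \in \stL$. Since $\stL$ is a subgroup of $\out$, the product $\psi_1\psi_2$ again lies in $\stL$, so Lemma \ref{defOfRho} applies to all three elements and supplies real numbers $\rho(\psi_1)$, $\rho(\psi_2)$, and $\rho(\psi_1\psi_2)$, each describing the translation of the respective element along $\mA_\vphi$.

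First I would compute the action of the product on $\G_t$ using the action law on $\os$. Because $\out$ acts on $\os$ on the right, so that $(\G_t \cdot \psi_1)\cdot \psi_2 = \G_t \cdot (\psi_1\psi_2)$, applying Lemma \ref{defOfRho} twice gives
\[
\G_t \cdot (\psi_1\psi_2) = (\G_t \cdot \psi_1)\cdot\psi_2 = \G_{\rho(\psi_1)+t}\cdot \psi_2 = \G_{\rho(\psi_2)+\rho(\psi_1)+t}.
\]
On the other hand, applying Lemma \ref{defOfRho} to the single element $\psi_1\psi_2$ directly yields $\G_t \cdot (\psi_1\psi_2) = \G_{\rho(\psi_1\psi_2)+t}$.

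Second, I would invoke injectivity of the parametrization. The fold line $t \mapsto \G_t$ is a proper injective map (Definition \ref{dfFoldLines}), so that $\G_a = \G_b$ forces $a = b$. Comparing the two expressions for $\G_t \cdot (\psi_1\psi_2)$ at a fixed $t$ therefore gives $\rho(\psi_1\psi_2) + t = \rho(\psi_2) + \rho(\psi_1) + t$, and hence $\rho(\psi_1\psi_2) = \rho(\psi_1)+\rho(\psi_2)$, using commutativity of $(\RR,+)$.

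The computation is essentially routine; the only points requiring care are bookkeeping ones. One must track the left/right convention for the $\out$-action consistently with the notation $\psi(\G_t)$ of Lemma \ref{defOfRho}, but since the target $(\RR,+)$ is abelian, the order in which $\rho(\psi_1)$ and $\rho(\psi_2)$ appear is immaterial. The genuine content has already been absorbed into Lemma \ref{defOfRho}, namely that each $\psi \in \stL$ acts as an honest translation of the uniquely parametrized axis, so that the homomorphism property reduces to the compatibility of the group action with this parametrization. I therefore do not expect a serious obstacle here.
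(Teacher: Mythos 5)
Your proposal is correct and follows essentially the same route as the paper: apply Lemma \ref{defOfRho} to the two factors and to their product, compute the action on $\G_t$ both ways, and cancel using injectivity of the parametrization (which the paper leaves implicit via the arc-length condition, and which you justifiably make explicit). The paper additionally records $\rho(\psi^{-1})=-\rho(\psi)$ as a warm-up, but that is subsumed by the multiplicative identity you prove, so there is no substantive difference.
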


\begin{proof} For each $t \in \RR$,
\[ \G_t = \psi^{-1} \psi(\G_t) = \psi^{-1}(\G_{\rho(\psi)+t}) =
\G_{\rho(\psi^{-1})+\rho(\psi) + t}.\]
Thus, $t = \rho(\psi^{-1}) + \rho(\psi) + t$, i.e. $\rho(\psi^{-1}) = - \rho(\psi)$. 
Moreover, let $\psi, \nu \in \stL$, then
\[ \G_{\rho(\psi\circ \nu)+t} = \psi \circ \nu(\G_t) = \psi(\nu(\G_t)) = \psi(\G_{\rho(\nu)+t}) = 
\G_{\rho(\psi) + \rho(\nu)+t}.\]
Thus, $\rho(\psi\circ \nu)= \rho(\psi) + \rho(\nu)$. 
We therefore obtain that $\rho$ is a homomorphism. 
\end{proof}

Since $\stL$ is virtually cyclic and $\rho(\vphi) \neq 0$, the image of $\stL$ under $\rho$ is infinite cyclic. Thus it gives rise to a surjective  homomorphism
\begin{equation}\label{eqHomo}
\tau \from \stL \to \Z
\end{equation} 
with a finite kernel. Note that the kernel consists precisely of those elements of $\out$ that, when acting on $\os$, fix the axis $\mA_\vphi$ pointwise. We show in Corollary \ref{kernelCor} that $ker(\tau)=id$.

\begin{prop}{\label{L:FixingAxis}}
Let $\vphi \in \out$ be an ageometric lone axis fully irreducible outer automorphism and let $\psi \in \stL$ be an outer automorphism that fixes $\mathcal{A}_\vphi$ pointwise. Let $f \from \G \to \G$ be an affine train track representative of some power $\vphi^R$ of $\vphi$ such that all vertices of $\G$ are principal and all directions but one are fixed (guaranteed by Proposition \ref{P:EveryVertexPrincipal}). Let $h\from \G \to \G$ be any isometry representing $\psi$.
Then $h$ permutes the $f$-fixed directions and hence fixes the (unique) nonfixed direction. 
\end{prop}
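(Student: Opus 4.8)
The plan is to show that the isometry $h$ carries the realization $\Lam^+_\vphi(\G)$ of the lamination to itself, to identify the $f$-fixed directions at each vertex as exactly the directions in which leaves of $\Lam^+_\vphi(\G)$ emanate from that vertex, and then to conclude by a direction count. For the setup, note that since $\psi$ fixes $\mA_\vphi$ pointwise and the point $X_\G \in \os$ carrying the affine train track representative $f$ lies on $\mA_\vphi$, we have $X_\G \cdot \psi = X_\G$, which is precisely why $\psi$ is representable by an isometry $h \from \G \to \G$. Because $\G$ has no vertices of valence $\leq 2$, the isometry $h$ must send branch points to branch points; hence it permutes the vertices of $\G$, maps edges isometrically to edges, and induces a bijection $Dh$ carrying the directions at each vertex $v$ to the directions at $h(v)$.

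The heart of the argument is the identification, at each vertex $v$, of the $f$-fixed directions with the directions in which leaves of $\Lam^+_\vphi(\G)$ leave $v$. Since $f$ is rotationless and PNP-free (Remark \ref{noPNP}), the periodic directions at $v$ are fixed, and these are exactly the vertices of the stable Whitehead graph $SW(f,v)$. By Lemma \ref{tripodLemma} together with the connectivity of $SW(f,v)$, every fixed direction $d$ is the initial direction of an eigenray obtained by iterating the corresponding edge under $f$, and this eigenray is a ray of a leaf of $\Lam^+_\vphi(\G)$; conversely, a direction in which a leaf emanates from $v$ is a periodic, hence fixed, direction. Thus the $f$-fixed directions at $v$ are precisely the leaf directions at $v$.

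Next I would invoke $\psi \in \stL$, so that $\psi$ preserves $\Lam^+_\vphi$: as $h$ realizes $\psi$ on $\G$, it sends $\Lam^+_\vphi(\G)$ to itself, carrying leaves to leaves. Since $h$ also sends vertices to vertices, it carries each leaf ray emanating from a vertex to a leaf ray emanating from a vertex, so by the identification above $Dh$ sends fixed directions to fixed directions; that is, $Dh$ permutes the $f$-fixed directions. Finally, because $Dh$ restricts to a bijection from the directions at $v$ onto those at $h(v)$ and each vertex carries exactly one non-fixed direction, a direction count forces the unique non-fixed direction at $v$ to be sent to the unique non-fixed direction at $h(v)$, which is the assertion.

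I expect the main obstacle to be the identification in the second paragraph, and in particular the converse containment: one must argue not only that every fixed direction appears as a leaf direction (via the eigenray/tripod structure of Lemma \ref{tripodLemma} and connectivity of $SW(f,v)$), but also that no leaf of $\Lam^+_\vphi(\G)$ leaves a vertex in the unique non-fixed direction. This second point is the delicate one, as it amounts to knowing that the non-fixed direction is not swept into the lamination under iteration of $f$, so that the leaf directions coincide exactly with the vertex set of the stable Whitehead graph.
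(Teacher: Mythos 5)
Your central identification fails, and it fails precisely at the point you yourself flag as delicate: it is \emph{not} true that no leaf of $\Lam^+_\vphi(\G)$ leaves a vertex in the unique non-fixed direction. For a train track representative of a fully irreducible outer automorphism, the local Whitehead graph $LW(v;\G)$ at each vertex is connected (this is recalled in Subsection \ref{ss:wg}), and by definition its vertex set consists of \emph{all} directions at $v$, of which there are at least three since $\G$ has no valence-1 or valence-2 vertices. Moreover, every edge image $f^k(e)$ is a leaf segment of $\Lam^+_\vphi(\G)$ (iterate a periodic point in the interior of an edge as in Definition \ref{d:IteratingNeighborhoods} and use primitivity of the transition matrix), so the edges of $LW(v;\G)$ are exactly the turns crossed by leaves at $v$. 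Connectivity then forces every direction at $v$ --- including the unique non-fixed one --- to occur in a turn taken by some leaf. Hence ``leaf directions'' are simply \emph{all} directions: your converse containment (``a direction in which a leaf emanates from $v$ is periodic, hence fixed'') is false, the statement ``$Dh$ permutes leaf directions'' is vacuous, and the concluding direction count gives nothing. Only the forward containment (fixed directions are leaf directions, via $SW(f,v)$ and Lemma \ref{tripodLemma}) is correct.

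The problem is structural, not cosmetic: being an $f$-fixed direction is a dynamical property of $f$, and the lamination, viewed merely as a set of lines in $\G$, does not detect it; so no argument using only ``$h$ preserves $\Lam^+_\vphi(\G)$'' can single out the non-fixed direction. Relatedly, your argument uses only that $\psi$ fixes the one point of $\os$ represented by $\G$ and stabilizes the lamination; it never uses the hypothesis that $\psi$ fixes $\mA_\vphi$ \emph{pointwise}. The paper's proof uses that hypothesis essentially: since $\psi$ fixes both $\G$ and $\G\vphi$, one obtains isometries $h$ and $h'$ with $f\circ h$ homotopic to $h'\circ f$; this homotopy commutation is upgraded to genuine equality $f\circ h = h'\circ f$ (via boundary maps and the tripods of Lemma \ref{tripodLemma}), a combinatorial analysis of edges then shows $h'=h$, and only at that point does the relation $Df\circ Dh = Dh\circ Df$ yield that $Dh$ carries fixed directions to fixed directions. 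Some relation of this kind between $h$ and the dynamics of $f$ is unavoidable, and your proposal never establishes one. (A minor additional slip: there is exactly one non-fixed direction in all of $\G$, not one at each vertex, though this would not by itself break the count.)
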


\begin{proof}
$\psi$ fixes the points $\G$ and $\G \vphi$. Thus there exist isometries $h \from \G \to \G$ and $h' \from \G \vphi \to \G \vphi$ that represent an automorphism $\Psi$ in the outer automorphism class of $\psi$, i.e. the following diagrams commute
\[\begin{array}{ll}
 \xymatrix{
R_r \ar[d]^{m} \ar[r]^{\Psi} &R_r \ar[d]^{m}\\
\G \ar[r]^{h} &\G} &
 \xymatrix{
R_r \ar[d]^{f \circ m} \ar[r]^{\Psi} &R_r \ar[d]^{f \circ m}\\
\G \ar[r]^{h'} &\G} 
\end{array}
\]
Therefore, the following diagram commutes up to homotopy
\[ \xymatrix{
\G \ar[d]^{f} \ar[r]^{h} &\G \ar[d]^{f }\\
\G \ar[r]^{h'} &\G} \]
We will show that this diagram commutes and in fact that $h'=h$. Let $H \from \G \times I \to \G$ be the homotopy so that $H(x,0) = f \circ h(x)$ and $H(x,1)= h' \circ f(x)$. Choose a  lift $\wt f$ of $f$ and a lift $\wt h$ of $h$ to $\wt \G$. Note that $\wt f \circ \wt h$ is a lift of $f \circ h$. 
Let $\wt H$ be a lift of $H$ that starts with the lift $\wt f \circ \wt h$. Then $\wt H(x,1)$ is a lift of $h' \circ f$, which we denote by $\wt{h' \circ f}$. This in turn determines a lift $\wt h'$ of $h'$ so that $\wt{h' \circ f} = \wt h' \circ \wt f$. 
There exists a constant $M$ so that for all $x \in \wt \G$, we have $d(\wt f \circ \wt h(x), \wt h' \circ \wt f(x))\leq M$, hence $\wt f \circ \wt h(x)$ and $\wt h' \circ \wt f(x)$ induce the same homeomorphism on $\partial \wt \G$.
Let $v \in \wt\G$ be any vertex. By Lemma \ref{tripodLemma}  
there exist leaves 
 $\ell_1, \ell_2$ of $\Lam_+(\wt \G)$ that form a tripod whose vertex is $v$. 
Then $\wt f \circ \wt h(\ell_1), \wt f \circ \wt h(\ell_2), \wt f \circ \wt h(\ell_3)$ are embedded lines forming a tripod, as are $\wt h' \circ \wt f(\ell_1), \wt h' \circ \wt f(\ell_2), \wt h' \circ \wt f(\ell_3)$. Moreover, the ends of the two tripods coincide. 
Thus, $\wt f \circ \wt h(v)= \wt h' \circ \wt f(v)$. Since $v$ was arbitrary and the maps are linear, we have $\wt f \circ \wt h = \wt h' \circ \wt f$ and $f \circ h = h' \circ f$. 

We now show that $h'=h$. Let $e_1$ be the oriented edge representing the nonfixed direction of $Df$. For all $i \neq 1$, $Df(e_i)= e_i$. Let $k$ be such that $h(e_k) = e_1$. We have $Dh' \circ Df = Df \circ Dh$. 
Thus for $i \neq 1, k$ we have $Dh'(e_i) = Dh(e_i)$. Since $h$ and $h'$ are isometries, this implies that $h'(e_i) = h(e_i)$ for $i \neq 1,k$. 
If $k=1$ then $h$ and $h'$ agree on all but one oriented edge and therefore coincide, so we assume $k \neq 1$. 
If $e_1 \neq \bar e_k$ then $h(\bar e_i) = h'(\bar e_i)$ for both $i=1$ and $i=k$, hence $h'=h$. Therefore we may assume that $\bar e_1 = e_k$. We have $h(e_k) = e_1$, hence $h(e_1) = e_k$. So $h'(\{e_1, e_k\})= \{e_1, e_k\}$, hence we assume $h'(e_k) = e_k$ and $h'(e_1) = e_1$. 
Notice that the edge of $e_1$ must be a loop, since $h$ and $h'$ coincide on all other edges. Further, the orientation of the loop is preserved by $h'$ and flipped by $h$.
Now let $j \neq 1$ be so that $Df(e_1)=e_j$ and let $u$ be an edge path so that $f(e_1)=e_j u e_1$. Thus, 
$f(e_k)= \overline{f(e_1)} = e_k \bar u \bar e_j$. 
We have 
\[ e_k \bar u \bar e_j= f(e_k) = f(h(e_1)) = h'(f(e_1))= h'(e_j) h'(u) h'(e_1). \]
Thus $h'(e_j)= e_k$, so $j=k$. Hence $Df(e_1)= e_k = Df(e_k)$. So the unique illegal turn of $f$ is $\{e_1, \bar e_1\}$. But this is impossible since $f$ is a homotopy equivalence and must fold to the identity. 
Thus, $h=h'$ and so, since we have from the previous paragraph that $f \circ h = h' \circ f$, we now know that the following diagram commutes 
\[ \xymatrix{
\G \ar[d]^{f} \ar[r]^{h} &\G \ar[d]^{f }\\
\G \ar[r]^{h} &\G} \]
Let $e$ be an edge so that the direction defined by $e$ is fixed by $Df$. We have $Dh(e) = Dh( Df(e)) = Df( Dh(e) )$, therefore $Dh(e)$ is also a fixed direction. Thus $h(e)$ defines a fixed direction, hence the $f$-fixed directions are permuted by $h$.
\end{proof}

\begin{prop}\label{psiIsId}
Under the conditions of Proposition \ref{L:FixingAxis}, $h$ is the identity on $\G$.
\end{prop}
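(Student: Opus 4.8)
The plan is to cash in the two facts already isolated in the proof of Proposition \ref{L:FixingAxis}: that $f\circ h = h\circ f$, and that $Dh$ fixes the unique nonfixed direction. Write $e_1$ for the oriented edge carrying this nonfixed direction, and let $p$ be its initial vertex. Since $h$ is a graph isometry it carries vertices to vertices and edges to edges, and because $Dh$ fixes the germ of $e_1$ at $p$ we must have $h(p)=p$, with $h(e_1)$ being the edge issuing from $p$ in direction $e_1$ — that is, $e_1$ itself, with its given orientation. An isometry carrying an edge onto itself while fixing an endpoint and preserving orientation is the identity on that edge, so the first step is to conclude that $h$ fixes $e_1$ \emph{pointwise}: $h(x)=x$ for every $x\in e_1$.

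Next I would propagate this along the forward iterates of $e_1$. Iterating the commutation relation gives $h\circ f^n = f^n\circ h$ for all $n\geq 0$, and hence for every $x\in e_1$,
\[ h\big(f^n(x)\big) = f^n\big(h(x)\big) = f^n(x). \]
Thus $h$ fixes pointwise the entire image path $f^n(e_1)$ for each $n$. It then remains only to note that these iterated images exhaust $\Gamma$: since $\vphi$ is fully irreducible, the transition matrix of the train track map $f$ is a Perron--Frobenius (irreducible) matrix, so for each edge $e_k$ of $\Gamma$ there is an exponent $n$ for which $f^n(e_1)$ crosses $e_k$. For that $n$ the edge $e_k$ lies inside the pointwise-fixed path $f^n(e_1)$, so $h$ restricts to the identity on $e_k$. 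As $e_k$ was arbitrary, $h$ is the identity on every edge, whence $h=\mathrm{id}$ on $\Gamma$.

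The substantive step — the one through which all the hypotheses flow — is the reduction from ``$Dh$ fixes the direction $e_1$'' to ``$h$ fixes the edge $e_1$ pointwise'': here the isometry property and the lone-axis normal form of Proposition \ref{P:EveryVertexPrincipal}, which supplies \emph{exactly one} nonfixed direction, are what make $e_1$ a canonical fixed edge rather than merely one edge of an $h$-orbit. The only delicate bookkeeping is the case in which $e_1$ is a loop, where one must observe that $Dh$ fixing the initial germ still forces $h$ to preserve the loop's orientation (and hence to be the identity on it), not merely to preserve the underlying edge. Once $e_1$ is pinned down pointwise, the commutation with $f$ together with the irreducibility of the transition matrix does the rest essentially for free, the last ingredient being the standard fact that the forward iterates of a single edge under a train track representative of a fully irreducible automorphism eventually cross every edge of $\Gamma$.
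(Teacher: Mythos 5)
Your proof is correct, and it takes a genuinely more elementary route than the paper's. The paper passes to the universal cover and works with the attracting lamination: it takes an $f$-periodic point $p$ in the interior of the edge $e_1$, considers the leaf $\wt\ell$ of $\Lambda^+_\vphi(\wt\G)$ obtained by iterating a neighborhood of $\wt p$, proves that $\wt f$ fixes \emph{only} this one leaf (this step is where the absence of PNPs, Remark \ref{noPNP}, gets used), deduces from the commutation $f\circ h=h\circ f$ that $\wt h$ fixes $\wt\ell$, and then uses rigidity of an isometry of a line fixing the segment $\wt e$ to conclude $\wt h$ is the identity on $\wt\ell$; finally $\ell$ covers $\G$, giving $h=\mathrm{id}$. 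You instead stay in $\G$: you pin down $e_1$ pointwise from the fixed direction, propagate pointwise-fixedness through the relation $h\circ f^n=f^n\circ h$, and finish with irreducibility of the transition matrix, which guarantees that for every edge $e_k$ some image $f^n(e_1)$ crosses $e_k$. Both arguments lean on the same two outputs of the proof of Proposition \ref{L:FixingAxis} (the commutation and the fixed nonfixed direction), and both ultimately exploit the fact that iterates of a single edge see all of $\G$; but your version needs no universal cover, no laminations, and notably no appeal to the no-PNP property, since the paper only invokes that to establish uniqueness of the $\wt f$-fixed leaf --- a step your argument bypasses entirely. The one point to state carefully (which you do) is that fixing the \emph{direction} of $e_1$ forces $h$ to fix the oriented edge $e_1$ and hence, being an isometry fixing its initial vertex and preserving orientation, to fix it pointwise, including in the case where $e_1$ is a loop.
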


\begin{proof}
Let $e$ be the oriented edge of $\G$ representing the unique direction that is not $f$-fixed (or $f$-periodic). By Proposition \ref{L:FixingAxis}, we know that $h(e) = e$. Let $p$ be an $f$-periodic point in the interior of $e$. We can switch to a power of $f$ fixing $p$. Let $\ell \in \Lam^+_\vphi(\G)$ be the leaf of the lamination obtained by iterating a neighborhood of $p$ (see Definition \ref{d:IteratingNeighborhoods}).
Denote by $\wt \G$ the universal cover of $\G$ and let $\wt p$ be a lift of $p$ and $\wt e$ and $\wt \ell$ be the corresponding lifts of $e$ and $\ell$. Let $\wt h$ and $\wt f$ be the respective lifts of $h$ and $f$ fixing the point $\wt p$. The lift $\wt f$ fixes $\wt \ell$, since this leaf is generated by $\wt f$-iterating a neighborhood of $\wt p$ contained in $\wt e$. 

We first claim $\wt f$ fixes only one leaf of $\wt{\Lambda}^+_\vphi(\G)$. Indeed, if $\wt \ell'$ is another such leaf, both ends of $\wt \ell'$ are $\wt f$-attracting, so there exists an $\wt f$-fixed point $\wt q \in \wt \ell'$. 
If $\wt q \neq \wt p$, then the segment between them is an NP, contradicting the fact that $f$ has no PNPs (see Remark \ref{noPNP}). Thus $\wt q = \wt p$. The intersection  $\wt \ell' \cap \wt \ell$ contains $\wt p$ but since $\wt p$ is not a branch point, it must also contain $\wt e$, i.e. the edge containing $\wt p$.  But since $\wt \ell$ and $\wt \ell'$ are both $\wt f$-fixed they must both contain $\wt f^k(\wt e)$ for each $k$. Thus $\wt \ell = \wt \ell'$.

We now claim that $\wt h(\wt \ell) = \wt \ell$. 
 By the previous paragraph, it suffices to show that $\wt f( \wt h(\wt \ell)) = \wt h(\wt \ell)$. 
We have $\wt h(\wt \ell) = \wt h \circ \wt f (\wt \ell) = \wt f \circ \wt h(\wt \ell) = \wt f( \wt h(\wt \ell))$, and our claim is proved. 

Recall from before that $\wt h(\wt e) =\wt e$. Since $\wt h$ is an isometry, it restricts to the identity on $\wt \ell$. Projecting to $\G$, since $\ell$ covers all of $\G$, we get that $h$ equals the identity on $\G$. 
\end{proof}

Recall the surjective homomorphism $\tau$ from Equation \ref{eqHomo}.

\begin{cor}{\label{kernelCor}} Let $\vphi \in \out$ be an ageometric fully irreducible outer automorphism such that the axis bundle $\mA_{\vphi}$ consists of a single unique axis, then $Ker(\tau)= \{id\}$.
\end{cor}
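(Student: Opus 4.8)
The plan is to read the corollary directly off Proposition \ref{psiIsId}, once the kernel of $\tau$ has been correctly identified with a geometric stabilizer. First I would recall, as noted immediately after Equation \ref{eqHomo}, that $\tau$ is obtained from the signed-translation-length homomorphism $\rho$ of Lemma \ref{defOfRho} by identifying its infinite cyclic image with $\Z$; hence $Ker(\tau)=Ker(\rho)$. For $\psi \in \stL$, the condition $\rho(\psi)=0$ means $\psi(\G_t)=\G_{\rho(\psi)+t}=\G_t$ for every $t$, so $Ker(\tau)$ is precisely the set of $\psi \in \stL$ fixing the axis $\mA_\vphi$ pointwise.

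Next I would feed such a $\psi$ into the machinery already built. By Proposition \ref{P:EveryVertexPrincipal} there is an affine train track representative $f \from \G \to \G$ of some power $\vphi^R$ with all vertices principal and all but one direction fixed; the underlying point $\G \in \os$ lies on $\mA_\vphi$, since (as recalled in Subsection \ref{ss:LoneAxisFullyIrreducibles}) the axis bundle contains every point of Outer Space carrying an affine train track representative of a power of $\vphi$. Because $\psi \in Ker(\tau)$ fixes $\mA_\vphi$ pointwise it fixes $\G$, so $\psi$ is represented by an isometry $h \from \G \to \G$, and the hypotheses of Propositions \ref{L:FixingAxis} and \ref{psiIsId} are satisfied.

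Then Proposition \ref{psiIsId} yields that $h$ is the identity map on $\G$. An outer automorphism represented by the (marking-preserving) identity isometry is the identity element of $\out$, so $\psi = id$; as $\psi \in Ker(\tau)$ was arbitrary, this gives $Ker(\tau)=\{id\}$, as claimed.

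I expect essentially no obstacle in this final step: all the genuine difficulty has already been absorbed into Propositions \ref{L:FixingAxis} and \ref{psiIsId} (and into the tripod construction of Lemma \ref{tripodLemma} that drives their branch-point arguments). The only content remaining in the corollary's proof is the bookkeeping identification of $Ker(\tau)$ with the pointwise stabilizer of $\mA_\vphi$, together with the observation that the point $\G$ underlying $f$ genuinely lies on $\mA_\vphi$, so that Proposition \ref{psiIsId} is applicable.
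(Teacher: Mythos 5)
Your proof is correct and follows exactly the route the paper intends: the paper gives no separate proof of this corollary, relying instead on the remark after Equation \ref{eqHomo} (that $Ker(\tau)$ consists precisely of the elements fixing $\mA_\vphi$ pointwise) combined with Propositions \ref{L:FixingAxis} and \ref{psiIsId}. Your write-up simply makes explicit the bookkeeping the paper leaves implicit, namely that $Ker(\tau)=Ker(\rho)$ is the pointwise stabilizer of the axis, that the point $\G$ carrying the representative $f$ lies on $\mA_\vphi$, and that an outer automorphism represented by the marking-preserving identity isometry is trivial in $\out$.
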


\begin{mainthmA}{\label{T:MainTheorem1}}
Let $\vphi \in \out$ be an ageometric fully irreducible outer automorphism such that the axis bundle $\mA_{\vphi}$ consists of a single unique axis, then $\cent = \stL \cong \Z$. 
\end{mainthmA}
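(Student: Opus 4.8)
The plan is to assemble the theorem directly from the homomorphism $\tau$ of Equation \ref{eqHomo} together with the kernel computation already in hand. First I would record that, by its construction in the discussion following Lemma \ref{L:Homomorphism}, the map $\tau \from \stL \to \Z$ is a \emph{surjective} homomorphism: this is exactly where virtual cyclicity of $\stL$ (Theorem \ref{t:stabilizerlamination}) combined with $\rho(\vphi) \neq 0$ is used, since it forces $\rho(\stL)$ to be infinite cyclic and hence $\tau$ to be well defined and onto. By Corollary \ref{kernelCor} its kernel is trivial. A surjective homomorphism onto $\Z$ with trivial kernel is an isomorphism, so I immediately conclude $\stL \cong \Z$.

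Having shown $\stL$ is infinite cyclic, I would then invoke the abstract group theory already set up in Corollary \ref{centStl}. Part (2) of that corollary states precisely that when $\stL$ is cyclic, the left-hand inclusion $\cent \leq \stL$ in the chain $\cent \leq \stL \leq \com$ is an equality. Applying it gives $\cent = \stL$, and combined with the previous paragraph we obtain $\cent = \stL \cong \Z$, as claimed.

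The genuine difficulty is not in this final assembly but in the inputs it consumes. All the real content lies in the preceding Propositions \ref{L:FixingAxis} and \ref{psiIsId}, which together show that any $\psi \in \stL$ acting as the identity on the axis $\mA_\vphi$ must itself be trivial; this is what upgrades $Ker(\tau)$ from merely finite (which is all virtual cyclicity supplies) to trivial. Once the kernel is known to vanish, the present theorem is a formal deduction. I therefore expect no serious obstacle in the proof of the statement itself: the only points requiring care are citing the correct clause of Corollary \ref{centStl} for the cyclic case and confirming that the surjection $\tau$ has already been constructed with $\Z$ as its image, both of which are handled by results appearing earlier in the text.
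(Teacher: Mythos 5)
Your proof is correct and follows exactly the paper's own argument: surjectivity of $\tau$ plus the trivial kernel from Corollary \ref{kernelCor} gives $\stL \cong \Z$, and Corollary \ref{centStl}(2) then yields $\cent = \stL$. No gaps; your identification of Propositions \ref{L:FixingAxis} and \ref{psiIsId} as the real content behind the kernel computation matches the structure of the paper as well.
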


\begin{proof} 
We showed in Corollary \ref{kernelCor} that $Ker(\tau)=id$. It then follows from Equation \ref{eqHomo} that $\stL \cong \Z$ the rest follows from Corollary \ref{centStl}.
\qedhere
\end{proof}

\begin{mainthmB}
Let $\vphi \in \out$ be an ageometric fully irreducible outer automorphism such that the axis bundle $\mA_{\vphi}$ consists of a single unique axis, then either 
\begin{enumerate}
\item $\cent = \norm = \com \cong \Z$  or
\item $\cent \cong \Z$ and $\norm = \com \cong \Z_2 \ast \Z_2$.
\end{enumerate}
Further, in the second case, we have that $\vphi^{-1}$ is also an ageometric fully irreducible outer automorphism such that the axis bundle $\mA_{\vphi^{-1}}$ consists of a single unique axis. 
\end{mainthmB}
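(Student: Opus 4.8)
The plan is to leverage the structural results already established, especially Theorem A ($\cent = \stL \cong \Z$) and Corollary \ref{centStl}, which together give the chain $\cent = \stL \leq \com = \norma{k}$ with $[\com : \stL] \leq 2$. First I would dispose of the easy dichotomy: since $\com$ contains $\stL \cong \Z$ as a subgroup of index $\leq 2$, there are exactly two cases. If the index is $1$, then $\com = \stL = \cent \cong \Z$, and since $\norm$ is sandwiched between $\cent$ and $\com$ by the inclusions in the introduction, we get $\cent = \norm = \com \cong \Z$, which is case (1). If the index is $2$, then $\com$ is a group containing an infinite cyclic subgroup $\stL$ of index $2$; I would then invoke the classification of such groups (a virtually-$\Z$ group with an index-$2$ copy of $\Z$ is either $\Z$ or the infinite dihedral group $\Z_2 * \Z_2$), and since the index is genuinely $2$ here it cannot be $\Z$, forcing $\com \cong \Z_2 * \Z_2$. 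For the normalizer, Corollary \ref{centStl} gives $\com = \norma{k}$ for some $k$, and the definition of $\norm$ together with $\norm \leq \com$ pins down $\norm = \com \cong \Z_2 * \Z_2$, giving case (2).

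The more substantive part is the final assertion about $\vphi^{-1}$ in case (2). Here the index-$2$ hypothesis means there is an element $\psi \in \com \setminus \stL$. By Lemma \ref{L:CommensuratorStabilizer}(2), $\com = Stab(\{\Lambda^+_\vphi, \Lambda^-_\vphi\}) = Stab(\{T^+_\vphi, T^-_\vphi\})$, and since $\psi \notin \stL = Stab(\Lambda^+_\vphi, \Lambda^-_\vphi)$, the element $\psi$ must swap the ordered pair, i.e. $\psi \cdot \Lambda^+_\vphi = \Lambda^-_\vphi$ and $\psi \cdot T^+_\vphi = T^-_\vphi$. The key observation is that $T^-_\vphi = T^+_{\vphi^{-1}}$ and $\Lambda^-_\vphi = \Lambda^+_{\vphi^{-1}}$, so conjugation by $\psi$ carries the attracting data of $\vphi$ to the attracting data of $\vphi^{-1}$. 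Concretely, I would use Equation \eqref{eqLamAction} to show $\psi \vphi \psi^{-1}$ (or an appropriate power) has attracting lamination $\Lambda^-_\vphi$, hence equals $\vphi^{-1}$ up to powers.

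The main step is then to transport the lone-axis property across this conjugation. Since $\psi$ is an automorphism acting by homeomorphisms on $\os$ and on $\ol{\os}$, it carries the axis bundle $\mA_\vphi$ to $\mA_{\psi \vphi \psi^{-1}}$: a fold line converging to $(T^-_\vphi, T^+_\vphi)$ is sent by $\psi$ to a fold line converging to $(\psi \cdot T^-_\vphi, \psi \cdot T^+_\vphi) = (T^+_\vphi, T^-_\vphi)$, which is exactly the convergence data defining $\mA_{\vphi^{-1}}$ (after identifying the conjugate of $\vphi$ with $\vphi^{-1}$). Because $\psi$ is a homeomorphism, it induces a bijection on fold lines, so the axis bundle of the conjugate — and hence of $\vphi^{-1}$ — also consists of a single unique axis. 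I expect the one point requiring care is that $\ageometric$ is a conjugacy-invariant and inverse-invariant property of fully irreducibles (ageometricity is detected by the index/rotationless structure, which is preserved under conjugation, and $\vphi$ ageometric iff $\vphi^{-1}$ ageometric is standard), so that the conjugate genuinely realizes $\vphi^{-1}$ as an ageometric fully irreducible with a lone axis. The main obstacle is thus the bookkeeping of matching $\psi \vphi \psi^{-1}$ to $\vphi^{-1}$ via the swap of the invariant pairs and verifying the homeomorphism $\psi$ preserves the fold-line structure defining the bundle; everything else is immediate from the group-theoretic lemmas already proved.
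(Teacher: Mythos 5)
Your group-theoretic step has a genuine gap. The classification you invoke is incomplete, and your exclusion of $\Z$ is a non sequitur. A group containing $\Z$ as an index-$2$ subgroup need not be $\Z$ or $\Z_2 * \Z_2$: it can also be $\Z \times \Z_2$. Moreover, $\Z$ itself cannot be excluded "because the index is genuinely $2$," since $\Z$ does contain index-$2$ copies of $\Z$ (namely $2\Z$). The extensions $1 \to \Z \to G \to \Z_2 \to 1$ fall into three isomorphism types: $\Z$ and $\Z \times \Z_2$ (both from the trivial action of $\Z_2$ on $\Z$, as $H^2(\Z_2;\Z) \cong \Z_2$ with trivial coefficients), and $\Z \rtimes \Z_2 \cong \Z_2 * \Z_2$ (from the inverting action, for which the twisted $H^2$ vanishes). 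So as written your argument rules out neither $\com \cong \Z$ nor $\com \cong \Z \times \Z_2$. What actually eliminates both is not the index but the fact that $\cent$ is the \emph{full} centralizer of $\c{\vphi}$ in $\out$: in either of those two cases the extension would be central, so every $\psi \in \com$ would satisfy $\psi\vphi\psi^{-1} = \vphi$, forcing $\com \leq \cent$ and contradicting the assumption that the index is $2$. This is precisely how the paper argues: it splits according to the action of $\Z_2$ on $\cent \cong \Z$, kills the trivial action by the above contradiction, and then uses vanishing of the (twisted) $H^2$ to conclude $\com \cong \Z \rtimes \Z_2 \cong \Z_2 * \Z_2$ in the remaining case.

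Two further points. First, your claim that ``the definition of $\norm$ together with $\norm \leq \com$ pins down $\norm = \com$'' skips the needed inclusion $\com \leq \norm$: one must observe that, by the dihedral structure just established, every $\psi \in \com \setminus \cent$ inverts the normal subgroup $\cent \cong \Z$, hence satisfies $\psi\vphi\psi^{-1} = \vphi^{-1}$ and lies in $\norm$, giving $\com = \c{\cent, \psi} \leq \norm$. Second, your treatment of $\vphi^{-1}$ takes a genuinely different (and viable) route from the paper's: you transport the axis bundle directly by the homeomorphism $\psi$, using equivariance of $T^{\pm}_{\vphi}$ and of fold lines, whereas the paper simply notes that $\psi\vphi\psi^{-1} = \vphi^{-1}$ makes $\vphi^{-1}$ conjugate to $\vphi$, hence with the same index list and ideal Whitehead graph, and then cites the lone-axis criterion of \cite[Theorem 4.6]{mp13}. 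If you repair the first part as above, you obtain $\psi\vphi\psi^{-1} = \vphi^{-1}$ exactly, so your hedge ``equals $\vphi^{-1}$ up to powers'' (and the appeal to Proposition 2.16 of \cite{bfh97}) becomes unnecessary; with only the ``up to powers'' statement you would owe an extra argument that ageometricity and the lone-axis property pass between powers and roots.
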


\begin{proof} 
By Corollary \ref{centStl} and Theorem A, $\cent \leq \com$ and is of index $\leq 2$. If equality holds then we are in Case 1 and are done. Otherwise there is a short exact sequence
\begin{equation}\label{eqses} 
1 \to \cent   \to    \com  \to  \Z_2 \to 1. 
\end{equation}
There are two homomorphisms $\Z_2 \to \text{Aut}(\cent)$. We call the one whose image is the identity in $\text{Aut}(\cent)$ the trivial action and we call the one mapping the identity in $\Z_2$ to the automorphism in $\text{Aut}(\cent)$ taking a generator to its inverse the nontrivial action.
First suppose $\Z_2$ acts trivially. 
Let $\psi \in \com$ be any outer automorphism mapping to $1 \in \Z_2$, then $\psi \notin \cent$ and 
 $\psi \vphi \psi^{-1} = \vphi$ (because the action is trivial) and this is a contradiction. 
If $\Z_2$ acts nontrivially, then  $H^2(\Z_2, \Z) \cong \{0\}$ classifies the possible group extensions in the short exact sequence (\ref{eqses}) (see \cite[Proposition 3.7.3]{ben91}). Hence, the only possible extension is $\com \cong \cent \rtimes \Z_2 \cong \Z \rtimes \Z_2 \cong \Z_2 * \Z_2$.
Again let $\psi \notin \cent$ be any automorphism mapping to $1 \in \Z_2$. Since the homomorphism $\Z_2 \to \text{Aut}(\cent)$ is mapping 1 to the automorphism taking a generator to its inverse, $\psi \vphi \psi^{-1} = \vphi^{-1}$. Hence, $\psi \in \norm$.
So $\com = \c{\cent, \psi} \leq \norm$. On the other hand, by Remark \ref{r:NormalizerSubCommensurator}, $\norm \leq \com$. Hence, $\norm = \com$. 

We now prove the last part of the theorem. If $\com \cong \Z_2 * \Z_2$, then it contains an element $\psi$ mapping to the nonzero element in $\Z_2$ (as before) so that $\psi\vphi \psi^{-1}=\vphi^{-1}$. In other words, $\vphi^{-1}$ is in the conjugacy class of $\vphi$. Hence, it has the same index list and ideal Whitehead graph as $\vphi$ (and is also ageometric fully irreducible). In particular, $\vphi^{-1}$ satisfies the conditions to be a lone axis fully irreducible outer automorphism \cite[Theorem 4.6]{mp13}.
\qedhere
\end{proof}


\bibliographystyle{amsalpha}
\bibliography{PaperReferences}

\providecommand{\bysame}{\leavevmode\hbox to3em{\hrulefill}\thinspace}
\providecommand{\MR}{\relax\ifhmode\unskip\space\fi MR }
\providecommand{\MRhref}[2]{%
  \href{http://www.ams.org/mathscinet-getitem?mr=#1}{#2}
}
\providecommand{\href}[2]{#2}
\begin{thebibliography}{AKB12}

\bibitem[AKB12]{ak_asymmetry}
Yael Algom-Kfir and Mladen Bestvina, \emph{Asymmetry of outer space},
  Geometriae Dedicata \textbf{156} (2012), no.~1, 81--92.

\bibitem[Ben91]{ben91}
D.~J. Benson, \emph{Representations and cohomology. {I}}, Cambridge Studies in
  Advanced Mathematics, vol.~30, Cambridge University Press, Cambridge, 1991,
  Basic representation theory of finite groups and associative algebras.
  \MR{1110581 (92m:20005)}

\bibitem[BF12]{bf94}
Mladen Bestvina and Mark Feighn, \emph{{O}uter {L}imits},
  \url{http://andromeda.rutgers.edu/~feighn/papers/outer.pdf} (2012).

\bibitem[BFH97]{bfh97}
M.~Bestvina, M.~Feighn, and M.~Handel, \emph{Laminations, trees, and
  irreducible automorphisms of free groups}, Geometric and Functional Analysis
  \textbf{7} (1997), no.~2, 215--244.

\bibitem[BH92]{bh92}
M.~Bestvina and M.~Handel, \emph{Train tracks and automorphisms of free
  groups}, The Annals of Mathematics \textbf{135} (1992), no.~1, 1--51.

\bibitem[CL95]{cl95}
Marshall~M. Cohen and Martin Lustig, \emph{Very small group actions on {${\bf
  R}$}-trees and {D}ehn twist automorphisms}, Topology \textbf{34} (1995),
  no.~3, 575--617. \MR{1341810}

\bibitem[Cou14]{c12}
T.~Coulbois, \emph{{F}ree group automorphisms and train-track representative in
  python/sage}, \url{https://github.com/coulbois/sage-train-track}, 2012--2014.

\bibitem[CV86]{cv86}
M.~Culler and K.~Vogtmann, \emph{Moduli of graphs and automorphisms of free
  groups}, Inventiones mathematicae \textbf{84} (1986), no.~1, 91--119.

\bibitem[FH09]{fh09}
Mark Feighn and Michael Handel, \emph{Abelian subgroups of {${\rm Out}(F_n)$}},
  Geom. Topol. \textbf{13} (2009), no.~3, 1657--1727. \MR{2496054
  (2010h:20068)}

\bibitem[HM11]{hm11}
M.~Handel and L.~Mosher, \emph{Axes in {O}uter {S}pace}, no. 1004, Amer
  Mathematical Society, 2011.

\bibitem[KL11]{kl11}
I~Kapovich and M.~Lustig, \emph{Stabilizers of {R}-trees with free isometric
  actions of fn}, Journal of Group Theory \textbf{14} (2011), no.~5, 673--694.

\bibitem[KP15]{kp15}
I.~Kapovich and C.~Pfaff, \emph{A train track directed random walk on {${\rm
  Out}(F_r)$}}, International {J}ournal of {A}lgebra and {C}omputation
  \textbf{25} (August 2015), no.~5, 745--798.

\bibitem[LL03]{ll03}
G.~Levitt and M.~Lustig, \emph{Irreducible automorphisms of $ {F}_n$ have
  north--south dynamics on compactified outer space}, Journal of the Institute
  of Mathematics of Jussieu \textbf{2} (2003), no.~01, 59--72.

\bibitem[McC94]{m94}
J.~McCarthy, \emph{Normalizers and centralizers of pseudo-anosov mapping
  classes}, preprint), June \textbf{8} (1994).

\bibitem[MP13]{mp13}
L.~Mosher and C.~Pfaff, \emph{Lone axes in outer space}, arXiv preprint
  arXiv:1311.6855 (2013).

\bibitem[Pfa13]{IWGII}
C.~Pfaff, \emph{Ideal {W}hitehead graphs in {${\rm Out}(F_r)$} {II}: the
  complete graph in each rank}, Journal of Homotopy and Related Structures
  \textbf{10} (2013), no.~2, 275--301.

\bibitem[RW15]{rw15}
M.~Rodenhausen and R.~Wade, \emph{Centralisers of dehn twist automorphisms of
  free groups}, Mathematical Proceedings of the Cambridge Philosophical
  Society, vol. 159, Cambridge Univ Press, 2015, pp.~89--114.

\end{thebibliography}

\end{document}